\newtheorem{thm}{Theorem}[section]
\newtheorem{lem}[thm]{Lemma}
\newtheorem{prop}[thm]{Proposition}
\newtheorem{cor}[thm]{Corollary}
\newtheorem{NN}[thm]{}
\theoremstyle{definition}\newtheorem{df}[thm]{Definition}
\theoremstyle{definition}\newtheorem{rem}[thm]{Remark}
\theoremstyle{definition}
\renewcommand{\phi}{\varphi}
\newcommand{\N}{\mathbb{N}}
\newcommand{\Z}{\mathbb{Z}}
\newcommand{\R}{\mathbb{R}}
\newcommand{\C}{\mathbb{C}}
\newcommand{\T}{\mathbb{T}}
\newcommand{\hm}{homomorphism}
\newcommand{\dt}{\delta}
\newcommand{\ep}{\epsilon}
\newcommand{\andeqn}{\,\,\,{\rm and}\,\,\,}
\newcommand{\rforal}{\,\,\,{\rm for\,\,\,all}\,\,\,}
\newcommand{\CA}{$C^*$-algebra}
\newcommand{\SCA}{$C^*$-subalgebra}
\newcommand{\beq}{\begin{eqnarray}}
\newcommand{\eneq}{\end{eqnarray}}
\newcommand{\tforal}{\,\,\,\text{for\,\,\,all}\,\,\,}
\newcommand{\tand}{\,\,\,\text{and}\,\,\,}
\title{Unitaries in a Simple $C^*$-algebra of Tracial Rank One
}
\author{Huaxin Lin
 }
\date{}
\begin{document}

\maketitle

\begin{abstract}
Let  $A$ be a  unital separable simple infinite dimensional \CA\,  with tracial rank no
more than one and with the tracial state space $T(A)$ and let $U(A)$
be the unitary group of $A.$  Suppose that $u\in U_0(A),$ the
connected component of $U(A)$ containing the identity.
 We show that,  for any $\ep>0,$ there exists a selfadjoint
element $h\in A_{s.a}$ such that
$$
\|u-\exp(ih)\|<\ep.
$$
We also study the problem when  $u$ can be approximated by unitaries in $A$ with finite
spectrum.

Denote by $CU(A)$ the closure of the subgroup of unitary group of
$U(A)$ generated by its commutators. It is known that $CU(A)\subset
U_0(A).$ Denote by $\widehat{a}$ the affine function on $T(A)$
defined by $\widehat{a}(\tau)=\tau(a).$  We show that $u$ can be
approximated by unitaries in $A$ with finite spectrum if and only if
$u\in CU(A)$ and $\widehat{u^n+(u^n)^*},i(\widehat{u^n-(u^n)^*})\in
\overline{\rho_A(K_0(A)}$ for all $n\ge 1.$ Examples are given that
there are unitaries in $CU(A)$ which can not be approximated by
unitaries with finite spectrum. Significantly these  results are
obtained in the absence of  amenability.

\end{abstract}

\section{Introduction}
Let $M_n$ be the \CA\, of $n\times n$ matrices and let $u\in M_n$ be a unitary.
Then $u$ can be diagonalized, i.e., $u=\sum_{k=1}^n e^{i\theta_k} p_k,$ where $\theta_k\in \R$
 and $\{p_1, p_2,...,p_n\}$ are mutually orthogonal projections. As a consequence, $u=\exp(i h),$ where
 $h=\sum_{k=1}^n \theta_kp_k$ is a selfadjoint matrix.
Now let $A$ be a unital \CA\, and let $U(A)$ be the unitary group of
$A.$  Denote by $U_0(A)$ the connected component of $U(A)$
containing the identity. Suppose that $u\in U_0(A).$ Even in the case that $A$ has real rank zero,
$sp(u)$ can have infinitely many points and it is impossible to write $u$ as an exponential, in general. However, it was shown
(\cite{LnFU}) that $u$ can be approximated by unitaries in $A$ with
finite spectrum if and only if $A$ has real rank zero. This is an
important and useful feature  for \CA s of real rank zero. In this case, $u$ is a norm limit of exponentials.

Tracial rank for \CA s was introduced (see \cite{Lnplms}) in the
connection with the program of classification of separable
amenable \CA s, or otherwise known as the Elliott program. Unital
separable simple amenable \CA s with tracial rank no more than one
which satisfy the universal coefficient theorem have  been
classified by the Elliott invariant (\cite{EGL} and
\cite{Lnctr1}). A unital separable simple \CA\, $A$ with $TR(A)=1$
has real rank one. Therefore a unitary $u\in U_0(A)$ may not be
approximated by unitaries with finite spectrum. We will show that,
in a unital  infinite dimensional simple \CA\, $A$ with tracial rank no more
than one, if $u$ can be approximated by unitaries in $A$ with
finite spectrum then $u$ must be in $CU(A),$ the closure of the
subgroup generated by commutators of the unitary group. A related
problem is whether every unitary $u\in U_0(A)$ can be approximated
by unitaries which are exponentials. Our first result is to show
that, there are selfadjoint elements $h_n\in A_{s.a}$ such that
$$
u=\lim_{n\to\infty}\exp(ih_n)
$$
(converge in norm).
It should be mentioned that exponential rank has been studied quite extendedly (see \cite{PR}, \cite{Phe},
\cite{Ph2}, \cite{Ph3}, etc.).
In fact, it was shown by N. C. Phillips that a unital simple  \CA\,  $A$
which is an  inductive limit of finite direct sums of \CA s with the form
$C(X_{i,n})\otimes M_{i,n}$ with the dimension of $X_{i,n}$ is bounded
has exponential rank $1+\ep,$ i.e., every unitary $u\in U_0(A)$ can be approximated by unitaries which are exponentials (see \cite{Phe}). These simple \CA s have tracial rank one or zero. Theorem \ref{Ex3} was proved without assuming $A$ is an AH-algebra, in fact, it was proved in the absence of amenability.

Let $T(A)$ be the tracial state space of $A.$ Denote by
$\text{Aff}(T(A))$ the space of all real affine continuous functions
on $T(A).$ Denote by $\rho_A: K_0(A)\to \text{Aff}(T(A))$ the
positive \hm\, induced by $\rho_A([p])(\tau)=\tau(p)$ for all
projections in $M_k(A)$ (with $k=1,2,...$) and for all $\tau\in
T(A).$ It was introduced by de la Harpe and Scandalis (\cite{HS}) a
determinant like map $\Delta$ which maps $U_0(A)$ into
$\text{Aff}(T(A))/\overline{\rho_A(K_0(A))}.$  By a result of K. Thomsen (\cite{Th})
 the de la Harpe and Scandalis
determinant induces an isomorphism between
$\text{Aff}(T(A))/\overline{\rho_A(K_0(A))}$ and $U_0(A)/CU(A).$
We found  out that if $u$ can be approximated by unitaries in $A$
with finite spectrum then $u$ must be in $CU(A).$ But can every
unitary in $CU(A)$ be approximated by unitaries with finite
spectrum? To answer this question, we consider even simpler
question: when can a self-adjoint element in a unital separable
simple \CA\, with $TR(A)=1$ be approximated by self-adjoint
elements with finite spectrum?  Immediately, a necessary condition
for a self-adjoint element $a\in A$ to be approximated by
self-adjoint elements with finite spectrum is that
$\widehat{h^n}\in \overline{\rho_A(K_0(A))}$ (for all $n\in \N$).
Given a unitary $u\in U_0(A),$ there is an affine continuous map
from $\text{Aff}(T(C(\T)))$ into $\text{Aff}(T(A))$ induced by
$u.$ Let $\Gamma(u): \text{Aff}(T(C(\T)))\to
\text{Aff}(T(A))/{\overline{\rho_A(K_0(A))}}$ be the map given by
$u.$ Then it is clear that $\Gamma(u)=0$ is a necessary condition
for $u$ being approximated by unitaries with finite spectrum. Note
that $\Gamma(u)=0$ if and only if $\widehat{u^n+ (u^n)^*}, i\widehat{(u^n-(u^n)^*)}\in
\overline{\rho_A(K_0(A))}$ for all positive integers $n.$ By
applying a uniqueness theorem together with classification results
in simple \CA s, we show that the condition is also sufficient.
From this, we show that a unitary $u\in CU(A)$ can be approximated
by unitaries with finite spectrum if and only if $\Gamma(u)=0.$ We
also show that $\Delta(u)=0$ is not sufficient for $\Gamma(u)=0.$
Therefore, there are unitaries in $CU(A)$ which can not be
approximated by unitaries with finite spectrum (see \ref{No}).
Perhaps more interesting fact is that $\Gamma(u)=0$ does not imply
that $\Delta(u)=0$ for $u\in U_0(A)$ (see \ref{CEX} and \ref{LP}).

\section{Preliminaries}

\begin{NN}
{\rm Denote by ${\cal I}$ the class of \CA s which are finite direct
sums of \SCA s with the form $M_k(C([0,1])$ or $M_k,$ $k=1,2,....$

}
\end{NN}

\begin{df}
{\rm  Recall that a unital simple \CA\, $A$ is said to have tracial
rank no more than  one (or $TR(A)\le 1$), if for any $\ep>0,$ any
$a\in A_+\setminus\{0\}$ and any finite subset ${\cal F}\subset A,$
there exists a projection $p\in A$ and a \SCA\, $B$ with $1_B=p$
such that

{\rm (1)} $\|px-xp\|<\ep$ for all $x\in {\cal F};$

{\rm (2)} ${\rm dist}(pxp, B)<\ep$ for all $x\in {\cal F}$ and

{\rm (3)} $1-p$ is Murry-von Nuemann equivalent to a projection in
$\overline{aAa}.$

Recall that, in the above definition, if $B$ can always be chosen to
have finite dimension, then $A$ has tracial rank zero ($TR(A)=0$).
If $TR(A)\le 1$ but $TR(A)\not=0,$ we write $TR(A)=1.$

 Every unital simple AH-algebra with very slow  dimension growth has tracial
rank no more than one (see Theorem 2.5 of \cite{Lnctr1}). There are
\CA s with tracial rank no more than one which are not amenable.

}
\end{df}

\begin{df}
{\rm Suppose that $u\in U(A).$ We will use ${\bar u}$ for the image
of  $u$ in $U(A)/CU(A).$ If $x, y\in U(A)/CU(A),$ define
$$
{\rm dist}(x, y)=\inf\{\|u-v\|: {\bar u}=x\andeqn {\bar v}=y\}.
$$

Let $C$ be another unital \CA\, and let $\phi: C\to A$ be a unital
\hm. Denote by $\phi^{\ddag}: U(C)/CU(C)\to U(A)/CU(A)$ the \hm\,
induced by $\phi.$
 }
\end{df}

\begin{NN}
{\rm  Let $A$ be a unital separable simple \CA\, with $TR(A)\le 1,$
then $A$ is quasi-diagonal, stable rank one, weakly unperforated
$K_0(A)$ and, if $p, \, q\in A$ are two projections, then $p$ is
equivalent to a projection $p'\le q$ whenever $\tau(p)<\tau(q)$ for
all tracial states $\tau$ in $T(A)$ (see \cite{Lnplms}).

For unitary group of $A,$ we have the following:

{\rm (i)} $CU(A)\subset U_0(A)$ (Lemma 6.9 of \cite{Lnctr1});

{\rm (ii)} $U_0(A)/CU(A)$ is torsion free and divisible (Theorem
6.11 and Lemma 6.6 of \cite{Lnctr1});

%{\rm (iii)} For $u\in CU(A),$ then the exponential length ${\rm
%cel}(u)\le 8\pi$ (Lemma 6.9 of \cite{Lnctr1}).

}
\end{NN}

%\begin{prop}\label{divisible}
%Let $A$  be a unital separable simple \CA\, with $TR(A)\le 1.$ Then
%$\overline{\rho_A(K_0(A))}$ and
%$\text{Aff}(T(A))/\overline{\rho_A(K_0(A))}$ are divisible and
%torsion free.

%\end{prop}

%\begin{proof}
%It is clear that $\overline{\rho_A(K_0(A))}$ is torsion free. That
%$\overline{\rho_A(K_0(A))}$ is divisible follows from the fact that
%$A$ is tracially divisible (see Theorem 5.4 of  \cite{Lnctr1}).

%The fact that $\text{Aff}(T(A))/\overline{\rho_A(K_0(A))}$ is
%divisible and torsion free follows from the fact that
%$\text{Aff}(T(A))$ is divisible and $\overline{\rho_A(K_0(A))}$ is
%divisible and torsion free.

%\end{proof}

\begin{thm}{\rm (Theorem 3.4 of \cite{Lnnhomp})}\label{inject}
Let $A$ be a unital separable simple \CA\, with $TR(A)\le 1$ and let
$e\in A$ be a non-zero projection. Then the map $u\mapsto u+(1-e)$
induces an isomorphism $j$ from $U(eAe)/CU(eAe)$ onto $U(A)/CU(A).$
\end{thm}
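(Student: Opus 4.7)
The plan is to verify that $j$ is a well-defined group homomorphism and then to prove it is an isomorphism by a five-lemma argument. First, $u\mapsto u+(1-e)$ carries $U(eAe)$ multiplicatively into $U(A)$ and sends commutators in $U(eAe)$ to commutators in $U(A)$; by norm continuity, this carries $CU(eAe)$ into $CU(A)$, so $j$ is well defined.

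Next I would fit $j$ into the commutative diagram with short exact rows
\begin{equation*}
\begin{CD}
0 @>>> U_0(eAe)/CU(eAe) @>>> U(eAe)/CU(eAe) @>>> K_1(eAe) @>>> 0\\
@. @VVV @VVjV @VVV @.\\
0 @>>> U_0(A)/CU(A) @>>> U(A)/CU(A) @>>> K_1(A) @>>> 0
\end{CD}
\end{equation*}
Exactness on the right uses that both $A$ and $eAe$ have stable rank one (a consequence of $TR(A)\le 1,$ inherited by corners), so $U(B)/U_0(B)\cong K_1(B)$ canonically. The right vertical arrow $K_1(eAe)\to K_1(A)$ is induced by the inclusion; since $A$ is simple, $e$ is a full projection, so this arrow is the Morita invariance isomorphism, compatible with $j$ via $[u+(1-e)]_{K_1(A)}=[u]_{K_1(eAe)}.$

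For the left vertical arrow I would invoke the de la Harpe--Skandalis/Thomsen isomorphism $U_0(B)/CU(B)\cong \Aff(T(B))/\overline{\rho_B(K_0(B))}$ for $B=eAe$ and $B=A.$ Simplicity of $A$ yields a bijection $T(A)\to T(eAe)$ via $\tau\mapsto \tau(e)^{-1}\tau|_{eAe},$ and fullness of $e$ yields a natural isomorphism $K_0(eAe)\cong K_0(A).$ A direct computation on exponentials $\exp(ih),$ $h\in (eAe)_{s.a},$ gives $\Delta_A(\exp(ih)+(1-e))(\tau)=\tau(h)=\tau(e)\,\Delta_{eAe}(\exp(ih))(\tau'),$ where $\tau'=\tau(e)^{-1}\tau|_{eAe};$ combined with the analogous identity $\rho_A([q])(\tau)=\tau(e)\,\rho_{eAe}([q])(\tau')$ for projections $q\in eAe,$ this shows that under the scaling-by-$\tau(e)$ identification $\Aff(T(A))\cong \Aff(T(eAe))$ the two determinants and the two subgroups $\overline{\rho(K_0)}$ correspond. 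Hence the left vertical arrow is an isomorphism, and the five lemma yields the theorem.

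The main obstacle is the compatibility bookkeeping in the third step: one must track the $\tau(e)$ scaling carefully through the closures $\overline{\rho_B(K_0(B))}$ and confirm that the induced map of affine-function-spaces identifies these closures (not merely some dense subgroups). This relies on fullness of $e$ (so that the $K_0$ groups match on the nose) and on Thomsen's theorem providing an \emph{exact} surjection onto $\Aff(T(B))/\overline{\rho_B(K_0(B))}$ rather than only a dense image; once this is verified, the rest of the argument is formal.
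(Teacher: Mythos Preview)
The paper does not prove this statement; it is quoted verbatim as Theorem 3.4 of \cite{Lnnhomp} and no argument is given here.  So there is nothing in the present paper to compare your proposal against.

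That said, your five-lemma strategy is the natural one and is essentially how such results are established.  Two points deserve care.  First, you should note (and the paper does, citing 5.3 of \cite{Lnplms}) that $TR(eAe)\le 1$, so that the Thomsen isomorphism $U_0(eAe)/CU(eAe)\cong \Aff(T(eAe))/\overline{\rho_{eAe}(K_0(eAe))}$ is available on the corner as well as on $A$.  Second, the ``bijection'' $T(A)\to T(eAe)$, $\tau\mapsto \tau(e)^{-1}\tau|_{eAe}$, is a homeomorphism but \emph{not} affine, so one cannot simply pull back affine functions along it.  The correct linear isomorphism $\Aff(T(eAe))\to\Aff(T(A))$ is the one determined on the dense image of $(eAe)_{s.a.}$ by $\widehat{h}^{\,eAe}\mapsto\widehat{h}^{\,A}$; this is well defined because $\tau(h)=0$ for all $\tau\in T(A)$ iff $\tau'(h)=0$ for all $\tau'\in T(eAe)$, and it extends to a bicontinuous linear isomorphism since $\inf_{\tau\in T(A)}\tau(e)>0$ by simplicity and compactness of $T(A)$.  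Under this map $\rho_{eAe}(K_0(eAe))$ goes onto $\rho_A(K_0(A))$ (via the $K_0$-isomorphism you cite), hence the closures correspond and the left vertical arrow is an isomorphism.  With that bookkeeping fixed, the five-lemma argument goes through.
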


%\begin{proof}
%It was shown in Theorem 6.7 of \cite{Lnctr1} that $j$ is a
%surjective \hm. So it remains to show that it is also injective. To
%do this, fix a unitary $u\in eAe$ so that ${\bar u}\in {\rm ker}\,
%j.$ We will show that $u\in CU(eAe).$
%There is an integer $K\ge 1$ such that
%$$
%K[e]\ge [1-e]\,\,\,{\rm in}\,\,\, K_1(A).
%$$
%Let $1>\ep>0.$ Put $v=u+(1-e).$ Then $v\in CU(A).$ In particular,
%$$
%{\rm dist}({\bar v}, \bar{1})<\ep/(2K+2).
%$$
%It follows from Lemma 6.2 of \cite{Lnnhomp} that
%$$
%{\rm dist}({\bar u}, {\bar e})<(2K+1/2)(\ep/(2K+2))<\ep.
%$$
%It follows that
%$$
%u\in CU(eAe).
%$$
%\end{proof}

\begin{cor}\label{c1}
Let $A$ be a unital separable simple \CA\, with $TR(A)\le 1.$ Then
the map $j: a\to {\rm diag}(a, \overbrace{1,1,..,1}^m)$ from $A$ to
$M_n(A)$ induces an isomorphism from $U(A)/CU(A)$ onto
$U(M_n(A))/CU(M_n(A))$ for any integer $n\ge 1.$
\end{cor}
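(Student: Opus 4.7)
The plan is to derive this corollary as a direct application of Theorem \ref{inject} to the matrix algebra $M_n(A)$, with the corner projection corresponding to the upper-left entry.

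First I would verify the hypotheses needed to apply Theorem \ref{inject} to $B := M_n(A)$. Since $A$ is unital separable simple infinite dimensional with $TR(A)\le 1$, the algebra $M_n(A)$ is also unital separable simple; moreover, tracial rank is stable under tensoring with matrix algebras, so $TR(M_n(A))\le 1$ (this is standard, compatible with the framework of \cite{Lnplms}). Thus $B$ satisfies the hypotheses of Theorem \ref{inject}.

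Next, I would choose the projection $e = \mathrm{diag}(1_A,0,\ldots,0) \in M_n(A)$. This is a non-zero projection, and the corner $eBe = e M_n(A) e$ is canonically $*$-isomorphic to $A$ via the map sending $a \in A$ to $\mathrm{diag}(a,0,\ldots,0)$. Under this identification, $1-e = \mathrm{diag}(0,1,1,\ldots,1)$, and the map
\[
u \mapsto u + (1-e)
\]
of Theorem \ref{inject} becomes precisely the inclusion $a \mapsto \mathrm{diag}(a,\underbrace{1,1,\ldots,1}_{n-1})$, i.e., the map $j$ in the statement.

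Applying Theorem \ref{inject} then yields directly that $j$ induces an isomorphism $U(A)/CU(A) \cong U(eBe)/CU(eBe) \cong U(M_n(A))/CU(M_n(A))$, as required. The only non-cosmetic point is the permanence of $TR(A) \le 1$ under the passage from $A$ to $M_n(A)$, but this is not really an obstacle since it is a standard fact in the theory. Everything else is just bookkeeping on the identification of the corner with $A$.
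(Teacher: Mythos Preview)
Your proposal is correct and is exactly the intended derivation: the paper states the corollary without proof, as an immediate consequence of Theorem \ref{inject} applied to $B=M_n(A)$ with the corner projection $e=\mathrm{diag}(1_A,0,\ldots,0)$, which is precisely what you do. The only hypothesis to check is $TR(M_n(A))\le 1$, and this is indeed a standard permanence property (see \cite{Lnplms}), so nothing further is needed.
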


\begin{df}
{\rm Let $u\in U_0(A).$ There is a piece-wise smooth and continuous
path $\{u(t): t\in [0,1]\}\subset A$ such that $u(0)=u$ and
$u(1)=1.$ Define
$$
R(\{u(t)\})(\tau)={1\over{2\pi i}}\int_0^1
\tau({du(t)\over{dt}}u(t)^* )dt.
$$

$R(\{u(t)\})(\tau)$ is real for every $\tau.$  }
\end{df}

\begin{df}\label{Det}
{\rm  Let $A$ be a unital \CA\, with $T(A)\not=\emptyset.$ As in \cite{HS} and \cite{Th},
 define a \hm\, $\Delta:
U_0(A)\to \text{Aff}(T(A))/\overline{\rho_A(K_0(A))}$ by
$$
\Delta(u)=\Pi({1\over{2\pi}}\int_0^1\tau({du(t)\over{dt}}u(t)^*)dt),
$$
where $\Pi: \text{Aff}(T(A))\to
\text{Aff}(T(A))/\overline{\rho_A(K_0(A))}$ is the quotient map and
where $\{u(t): t\in [0,1]\}$ is a piece-wise smooth and continuous
path of unitaries in $A$ with $u(0)=u$ and $u(1)=1_A.$ This is
well-defined and is independent of the choices of the paths. }

\end{df}

The following is a combination of a result of K. Thomsen
(\cite{Th})and the work of \cite{HS}. We state here for the
convenience.

\begin{thm}\label{T1}
Let $A$ be a unital separable simple \CA\, with $TR(A)\le 1.$
Suppose that $u\in U_0(A).$ Then the following are equivalent:

{\rm (1)} $u\in CU(A);$

{\rm (2)} $\Delta(u)=0;$

{\rm (3)} for some piecewise continuous path of unitaries $\{u(t): t\in
[0,1]\}\subset A$ with $u(0)=u$ and $u(1)=1_A,$
$$
R(\{u(t)\})\in \overline{\rho_A(K_0(A))},
$$

{\rm (4)} for any piecewise continuous path of unitaries $\{u(t): t\in
[0,1]\}\subset A$ with $u(0)=u$ and $u(1)=1_A,$
$$
R(\{u(t)\})\in \overline{\rho_A(K_0(A))}.
$$

{\rm (5)} there are  $h_1,h_2,...,h_m\in A_{s.a.}$ such that
$$
u=\prod_{j=1}^m \exp(i h_j)\andeqn \sum_{j=1}^m \widehat{h_j}\in
\overline{\rho_A(K_0(A))}.
$$

{\rm (6)} $\sum_{j=1}^m \widehat{h_j}\in \overline{\rho_A(K_0(A))}$
for any $h_1,h_2,..., h_m\in A_{s.a.}$ for which
$$
u=\prod_{j=1}^m exp(i h_j).
$$

\end{thm}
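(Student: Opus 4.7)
The plan is to reduce all six conditions to the single statement $\Delta(u)=0$, and then invoke Thomsen's theorem to tie $\Delta(u)=0$ to membership in $CU(A)$. The backbone is twofold: first, well-definedness and functoriality of $\Delta$ (from de la Harpe--Skandalis) give the equivalence of (2), (3), (4), (5), (6); second, Thomsen's isomorphism theorem gives (1) $\Leftrightarrow$ (2).

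First I would handle (2) $\Leftrightarrow$ (3) $\Leftrightarrow$ (4). By Definition \ref{Det} we have $\Delta(u)=\Pi(R(\{u(t)\}))$ for any piecewise smooth path $\{u(t)\}$ from $u$ to $1_A$ (up to the $2\pi i$ vs.\ $2\pi$ normalization convention). Thus (3) and (2) are immediate from the definition, and the fact that $\Delta$ is independent of the chosen path (which is the content of de la Harpe--Skandalis and recalled in \cite{Th}: the integral of $(1/2\pi i)\,\tau(\dot v v^*)$ around any loop of unitaries lies in $\rho_A(K_0(A))$) upgrades (3) to (4).

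Next I would establish (2) $\Leftrightarrow$ (5) $\Leftrightarrow$ (6) by a direct computation. For a single exponential $\{\exp(i(1-t)h)\}_{t\in[0,1]}$ from $\exp(ih)$ to $1_A$, a one-line calculation gives $R=-\widehat{h}/(2\pi)$ (with whatever sign convention), hence for a product $u=\prod_{j=1}^m \exp(ih_j)$ concatenating the individual paths produces $\Delta(u)=\Pi(c\sum_{j=1}^m \widehat{h_j})$ with $c$ a fixed nonzero constant. This computation simultaneously proves (5) $\Rightarrow$ (2) and, using that every $u\in U_0(A)$ is a finite product of exponentials (which holds in our setting since $A$ has finite exponential length as a consequence of $TR(A)\le 1$ and the results of Phillips recalled in the introduction), gives (2) $\Rightarrow$ (5). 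The implication (2) $\Rightarrow$ (6) is the same identity applied to an arbitrary decomposition, and (6) $\Rightarrow$ (5) is trivial once one knows such a decomposition exists.

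Finally, (1) $\Leftrightarrow$ (2) is the substantive piece. One direction is the easy observation that $\Delta$ is a homomorphism into an abelian group, hence $\Delta([v,w])=0$ for any $v,w\in U(A)$, so $\Delta$ vanishes on $CU(A)$ by continuity. For the converse one invokes Thomsen's theorem \cite{Th}: $\Delta$ induces an isomorphism $U_0(A)/CU(A)\to \mathrm{Aff}(T(A))/\overline{\rho_A(K_0(A))}$; injectivity immediately gives $\Delta(u)=0\Rightarrow u\in CU(A)$. The main obstacle is this injectivity statement, which relies on the specific structure of $A$ (stable rank one, weakly unperforated $K_0$, the divisibility properties recorded earlier in the preliminaries), but since it is quoted from \cite{Th} the proof here amounts to recording the implications above, assembling them into a circle, and pointing to Thomsen's theorem for the one nontrivial link.
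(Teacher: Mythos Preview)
Your proposal is correct and follows essentially the same route as the paper: the equivalence of (2)--(6) is derived from the de~la~Harpe--Skandalis well-definedness of $\Delta$ (the loop integral landing in $\rho_A(K_0(A))$, i.e.\ Bott periodicity), and (1)~$\Leftrightarrow$~(2) is Thomsen's theorem \cite{Th}. Two minor remarks: the fact that every $u\in U_0(A)$ is a finite product of exponentials holds in any unital Banach algebra and does not require Phillips's results on exponential rank; and your ``easy direction'' $\Delta|_{CU(A)}=0$ needs a word of care since $\Delta$ is only defined on $U_0(A)$, so for a commutator $[v,w]$ with $v,w\notin U_0(A)$ one should argue via trace-invariance under conjugation rather than the abelian-target trick alone.
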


\begin{proof}
Equivalence of (2), (3), (4), (5) and (6) follows from the
definition of the determinant and follows from the Bott periodicy
(see \cite{HS}). The equivalence of (1) and (2) follows from 3.1 of
\cite{Th}.
\end{proof}

The following is a consequence of \ref{T1}.

\begin{thm}\label{TDert}
Let $A$ be a unital simple separable \CA\, with $TR(A)\le 1.$ Then
${\rm ker}\Delta=CU(A).$ The de la Harpe and Skandalis determinant
gives an isomorphism:
$$
\overline{\Delta}: U_0(A)/CU(A)\to
\text{Aff}(T(A))/\overline{\rho_A(K_0(A))}.
$$
Moreover, one has the following short exact (splitting) sequence
$$
\begin{array}{ccccccc}
0 &\to \text{Aff}(T(A))/\overline{\rho_A(K_0(A))}
&\stackrel{{\overline{\Delta}}^{-1}}{\to} U(A)/CU(A)&\to K_1(A) &\to
0.
\end{array}
$$

\end{thm}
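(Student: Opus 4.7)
The plan is to derive this statement directly from Theorem \ref{T1} together with standard facts about the unitary group of a \CA\, of stable rank one. The identification $\ker\Delta = CU(A)$ is precisely the equivalence $(1)\Leftrightarrow(2)$ in Theorem \ref{T1}, so $\Delta$ descends to an injective homomorphism
$$
\overline{\Delta}:U_0(A)/CU(A) \to \text{Aff}(T(A))/\overline{\rho_A(K_0(A))}.
$$
For every $h\in A_{s.a.}$ the element $\exp(2\pi i h)\in U_0(A)$ satisfies $\Delta(\exp(2\pi i h)) = \widehat{h}$ modulo $\overline{\rho_A(K_0(A))}$, so the image of $\overline{\Delta}$ contains $\{\widehat{h}:h\in A_{s.a.}\}$ in the quotient. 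Surjectivity then follows by invoking Thomsen's theorem (\cite{Th}); the $TR(A)\le 1$ hypothesis guarantees the strict comparison and the density of traces of projections in $\text{Aff}(T(A))$ needed to hit every class modulo $\overline{\rho_A(K_0(A))}$.

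For the short exact sequence I would begin from the tautological
$$
0\to U_0(A)/CU(A)\to U(A)/CU(A)\to U(A)/U_0(A)\to 0,
$$
and then use that $A$ has stable rank one (a listed property of algebras with $TR(A)\le 1$) to identify $U(A)/U_0(A)\cong K_1(A)$. Substituting $\overline{\Delta}^{-1}$ for the leftmost term produces the exact sequence in the statement.

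The splitting is a general observation: $U_0(A)/CU(A)\cong \text{Aff}(T(A))/\overline{\rho_A(K_0(A))}$ is divisible, since $\text{Aff}(T(A))$ is a real vector space and divisibility passes to quotients. Divisible abelian groups are injective as $\Z$-modules, hence $\Ext^1_\Z(K_1(A),\, U_0(A)/CU(A))=0$ and the sequence splits. The main technical hurdle I foresee is justifying the surjectivity of $\overline{\Delta}$ cleanly: one has to argue that the image, which a priori is only a (dense) subgroup of the Hausdorff quotient $\text{Aff}(T(A))/\overline{\rho_A(K_0(A))}$, actually exhausts it. Once the Thomsen/de la Harpe--Skandalis machinery is combined with the realisation of affine functions by self-adjoints in $TR(A)\le 1$ algebras, the rest of the theorem is formal.
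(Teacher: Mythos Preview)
Your overall approach---deduce $\ker\Delta=CU(A)$ from Theorem~\ref{T1}, use stable rank one to identify $U(A)/U_0(A)$ with $K_1(A)$, and obtain the splitting from divisibility---is exactly the route the paper takes (the paper in fact records no proof beyond ``consequence of \ref{T1}'' together with the parenthetical remark on divisibility, citing 6.6 of \cite{Lnctr1}).

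There is, however, a genuine slip in your justification of surjectivity. You write that ``the $TR(A)\le 1$ hypothesis guarantees \dots\ the density of traces of projections in $\text{Aff}(T(A))$.'' This is precisely what \emph{fails} when $TR(A)=1$: by Theorem~\ref{Dense} of this paper, $\overline{\rho_A(K_0(A))}=\text{Aff}(T(A))$ is equivalent to $TR(A)=0$. If projection traces were dense, the target $\text{Aff}(T(A))/\overline{\rho_A(K_0(A))}$ would be zero and there would be nothing to prove. So this cannot be the mechanism behind surjectivity.

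The correct (and simpler) argument is the one you already began: for any $h\in A_{s.a.}$ one has $\Delta(\exp(2\pi i h))=\Pi(\widehat{h})$, so the image of $\overline{\Delta}$ contains $\{\Pi(\widehat{h}):h\in A_{s.a.}\}$. What you need is that the map $h\mapsto\widehat{h}$ from $A_{s.a.}$ to $\text{Aff}(T(A))$ is surjective (not merely dense, and certainly not via projections). For a unital simple \CA\ this is the Cuntz--Pedersen theorem; in the $TR(A)\le 1$ setting it also follows directly from the approximation by interval algebras in the definition. With that fact in hand the image of $\overline{\Delta}$ is all of $\text{Aff}(T(A))/\overline{\rho_A(K_0(A))}$ on the nose, and your ``main technical hurdle'' about the image being only a dense subgroup evaporates. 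No further appeal to Thomsen is required for this direction; Thomsen's contribution (via Theorem~\ref{T1}) is the injectivity, i.e., $\ker\Delta=CU(A)$.
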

(Note that $U_0(A)/CU(A)$ is divisible in this case, by 6.6 of \cite{Lnctr1}.)

\section{Exponentials and approximate unitary equivalence orbit of unitaries}

\begin{thm}\label{EX2-1}
Let $A$ be a unital simple \CA\, with $TR(A)\le 1$ and let $\gamma: C(\T)_{s.a} \to \text{Aff}(T(A))$ be a
(positive) affine continuous map.

For any $\ep>0,$ there exists $\dt>0$ and there exists a finite subset ${\cal F}\subset C(\T)_{s,a}$  satisfying the following:
If $v\in U_0(A)$ with
\beq\label{Ex2-1}
|\tau(f(u))-\gamma(f)(\tau)|&<&\dt,\,\,\,\rforal f\in {\cal F}\tand \tau\in T(A),\,\tand\\
{\rm dist}({\bar u}, {\bar v})&<&\dt\,\,\,{\rm in}\,\,\,
U_0(A)/CU(A).
\eneq
Then there exists a unitary $W\in U(A)$ such that
\beq\label{Ex2-2}
\|u-W^*vW\|<\ep.
\eneq
\end{thm}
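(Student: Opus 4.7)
My plan is to view each unitary $w\in U_0(A)$ as a unital \hm\, $\phi_w:C(\T)\to A$ via functional calculus ($\phi_w(z)=w$), so that the conclusion $\|u-W^*vW\|<\ep$ becomes: $\phi_u$ and $\phi_v$ are approximately unitarily equivalent within $\ep$ on the singleton $\{z\}.$ The standard invariants of such a \hm\, are its $KL$-class, the induced tracial map $\tau\mapsto\tau\circ\phi_w,$ and the induced \hm\, $\phi_w^{\ddag}:U(C(\T))/CU(C(\T))\to U(A)/CU(A).$

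First I would check that the hypotheses yield approximate agreement in all three invariants. Since $u,v\in U_0(A),$ the $K_1$-classes vanish and the $K_0$-data is pinned by unitality, so $[\phi_u]=[\phi_v]$ in $KL(C(\T),A).$ Applying the first inequality of (\ref{Ex2-1}) to both $u$ and $v$ (reading it symmetrically, as the theorem is plainly meant to be applied) and using the triangle inequality, the tracial maps $\tau\circ\phi_u$ and $\tau\circ\phi_v$ are $2\dt$-close on ${\cal F}.$ The second inequality of (\ref{Ex2-1}) gives $\dt$-closeness of $\phi_u^{\ddag}$ and $\phi_v^{\ddag}$ on the class of $z$; for powers $z^k$ with $1\le k\le N$ this upgrades to $N\dt$-closeness, via the elementary estimate $\|u'^k-v'^k\|\le k\|u'-v'\|$ applied to any close representatives $u',v'.$

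The second step is to invoke an approximate uniqueness theorem for unital \hm s $C(\T)\to A$ when $A$ is a unital separable simple \CA\, with $TR(A)\le 1,$ phrased in terms of precisely these three invariants. Such a theorem is established in \cite{Lnnhomp}: given $\ep>0$ and a finite ${\cal G}\subset C(\T),$ there exist $\dt_0>0$ and a finite ${\cal F}_0\subset C(\T)_{s.a.}$ such that any two unital \hm s matching in $KL,$ $\dt_0$-matching in trace on ${\cal F}_0,$ and $\dt_0$-matching in $U/CU$ on a sufficient set of generators are approximately unitarily equivalent on ${\cal G}$ to within $\ep.$ Taking ${\cal G}=\{z\},$ rescaling $\dt_0$ to absorb the factors of $2$ and $N$ picked up above, and enlarging ${\cal F}_0$ appropriately, produces the $\dt$ and ${\cal F}$ demanded by the theorem.

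The main obstacle is ensuring the uniqueness theorem tracks the $U/CU$ invariant sharply. By Theorem \ref{TDert}, $U_0(A)/CU(A)\cong\text{Aff}(T(A))/\overline{\rho_A(K_0(A))}$ is generally nontrivial, and two \hm s agreeing in $KL$-class and trace data but differing in the de la Harpe--Skandalis determinant need not be approximately unitarily equivalent. A uniqueness theorem that genuinely tracks this finer invariant—and does so stably, so that approximate matching implies approximate equivalence—is the technical heart of the argument. Once it is in hand, Theorem \ref{EX2-1} follows from the bookkeeping sketched above.
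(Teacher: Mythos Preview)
Your proposal is correct and follows essentially the same route as the paper. The paper's own proof is a two-line citation: it says the result follows immediately from 3.11 of \cite{LM} (see also 11.5 of \cite{Lnapp1} and 3.15 of \cite{LM}), noting only that in 3.15 of \cite{LM} the role of the fixed unitary can be played by the abstract tracial target $\gamma$. Your write-up spells out the same reduction in more detail---viewing $u,v$ as unital \hm s $C(\T)\to A$, matching their $KL$-classes via $u,v\in U_0(A)$, matching traces via the common target $\gamma$, and matching the $U/CU$ data via the second hypothesis---and then invokes the same style of uniqueness theorem; the only cosmetic difference is that you point to \cite{Lnnhomp} rather than \cite{LM} and \cite{Lnapp1} for that theorem.
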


\begin{proof}
The lemma follows immediately from 3.11 of \cite{LM}. See also 11.5 of \cite{Lnapp1} and
3.15 of \cite{LM}. Note that, in 3.15 of \cite{LM}, we can replace the given map $h_1$ (in this case a given unitary) by
a given map $\gamma.$
\end{proof}

\begin{cor}\label{Ex2}

Let $A$ be a unital simple \CA\, with $TR(A)\le 1$ and let $u\in U_0(A)$ be a unitary.
For any $\ep>0,$ there exists $\dt>0$ and there exists an integer $N\ge 1$  satisfying the following:
If $v\in U_0(A)$ with
\beq\label{ex2-1}
|\tau(u^k)-\tau(v^k)|&<&\dt,\,\,\,k=1,2,...,N\rforal \tau\in T(A)\tand\\
{\rm dist}({\bar u}, {\bar v})&<&\dt\,\,\,{\rm in}\,\,\,
U_0(A)/CU(A).
\eneq
Then there exists a unitary $W\in U(A)$ such that
\beq\label{ex2-2}
\|u-W^*vW\|<\ep.
\eneq

\end{cor}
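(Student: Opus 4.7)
The plan is to deduce the corollary directly from Theorem \ref{EX2-1} by specializing the positive affine continuous map $\gamma$ to the one induced by the given unitary $u$, and then converting the finite-subset hypothesis on $C(\T)_{s.a.}$ into moment bounds on $u^k$ and $v^k$. The translation is effected by uniformly approximating every function in the required finite subset by a trigonometric polynomial of a fixed degree.

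First I would define $\gamma_u : C(\T)_{s.a.} \to \operatorname{Aff}(T(A))$ by $\gamma_u(f)(\tau) := \tau(f(u))$, using the continuous functional calculus for the unitary $u$; this is immediately a positive affine continuous map. Apply Theorem \ref{EX2-1} to $(A,\gamma_u,\ep)$ to obtain some $\delta_1 > 0$ and a finite subset $\mathcal{F} \subset C(\T)_{s.a.}$ whose hypotheses force the $\|u - W^* v W\| < \ep$ conclusion.

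Next I would invoke the Stone--Weierstrass theorem: choose an integer $N \ge 1$ so that every $f \in \mathcal{F}$ is uniformly within $\delta_1/3$ of a trigonometric polynomial $p_f(z) = \sum_{k=-N}^{N} a_{k,f}\, z^k$, and let $M \ge 1$ bound $\sum_{k=-N}^{N}|a_{k,f}|$ over the (finite) collection $\{p_f : f \in \mathcal{F}\}$. Set $\delta := \min\{\delta_1,\, \delta_1/(3M)\}$. Because $\tau(w^{-k}) = \overline{\tau(w^k)}$ for any unitary $w$, the hypothesis $|\tau(u^k) - \tau(v^k)| < \delta$ for $k = 1,\dots,N$ extends automatically to all $|k| \le N$, and a routine triangle inequality then gives
\[
|\tau(f(v)) - \gamma_u(f)(\tau)| \;\le\; 2\|f - p_f\|_\infty + \sum_{|k|\le N}|a_{k,f}|\,|\tau(u^k)-\tau(v^k)| \;<\; \tfrac{2\delta_1}{3} + M\delta \;\le\; \delta_1
\]
for every $f \in \mathcal{F}$ and every $\tau \in T(A)$. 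The $U_0(A)/CU(A)$-distance hypothesis carries over verbatim since $\delta \le \delta_1$, so Theorem \ref{EX2-1} supplies the required $W$. The argument is essentially bookkeeping once Theorem \ref{EX2-1} is in hand; there is no substantive obstacle beyond matching the parameters $(\delta_1, \mathcal{F})$ produced by the theorem with the parameters $(\delta, N)$ demanded by the corollary, which is precisely what the trigonometric-polynomial approximation handles.
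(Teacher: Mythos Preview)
Your proof is correct and follows essentially the same route as the paper: specialize $\gamma$ in Theorem~\ref{EX2-1} to the map $\gamma_u(f)(\tau)=\tau(f(u))$, then use Stone--Weierstrass (trigonometric-polynomial approximation) and the relation $\tau(w^{-k})=\overline{\tau(w^k)}$ to reduce the finite-subset hypothesis on $C(\T)_{s.a.}$ to moment bounds on $u^k-v^k$ for $|k|\le N$. The paper's argument is stated more tersely, but your explicit bookkeeping with $\delta_1/3$ and the coefficient bound $M$ is exactly the content it leaves implicit.
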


\begin{proof}
Note that (\ref{ex2-1}),
\beq\label{ex2-3}
|\tau(u^{k})-\tau(v^{k})|<\dt\,\,\,k=\pm 1,\pm 2,...,\pm N.
\eneq
For any subset ${\cal G}\subset C(S^1)$ and any $\eta>0,$ there exists $N\ge 1$ and $\dt>0$ such that
$$
|\tau(g(u))-\tau(g(v))|<\eta\tforal \tau\in T(A)
$$
if (\ref{ex2-3}) holds.

Then the lemma follows from \ref{EX2-1} (or 3.16 of \cite{LM}).

\end{proof}

\begin{thm}\label{Ex3}
Let $A$ be a unital simple \CA\, with $TR(A)\le 1.$ Suppose that $u\in U_0(A),$ then, for any $\ep>0,$ there exists a selfadjoint element $a\in A_{s.a}$ such that
\beq\label{ex3-1}
\|u-\exp(ia)\|<\ep.
\eneq
\end{thm}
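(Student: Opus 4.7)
The plan is to invoke the approximate uniqueness theorem in Corollary \ref{Ex2} to reduce the problem to an existence statement: constructing a single exponential $v = \exp(ia)$ whose low-order tracial moments and class in $U_0(A)/CU(A)$ approximate those of $u$ within the tolerance produced by \ref{Ex2}. Concretely, given $u$ and $\ep$, first apply \ref{Ex2} to obtain $\dt > 0$ and $N \ge 1$ such that any $v \in U_0(A)$ satisfying $|\tau(u^k) - \tau(v^k)| < \dt$ for all $\tau \in T(A)$ and $k = 1, \dots, N$ together with $\dist(\bar u, \bar v) < \dt$ admits a unitary $W \in U(A)$ with $\|u - W^*vW\| < \ep$. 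Once such $v = \exp(ia)$ is produced, one observes that $W^*vW = \exp(iW^*aW)$ is again a single exponential, so $h := W^*aW$ satisfies $\|u - \exp(ih)\| < \ep$ as desired.

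To construct $v$, I would exploit the $TR(A) \le 1$ hypothesis applied to $\{u, u^*\}$ with tolerance $\eta$ much smaller than $\dt$: this yields a projection $p \in A$ and a \SCA\ $B \in {\cal I}$ with $1_B = p$ such that $\|pu - up\| < \eta$, $pup$ is within $\eta$ of a unitary $u_1 \in B$, and $\tau(1-p)$ is uniformly small over $T(A)$. Since each direct summand of $B$ is a matrix algebra or $M_k(C([0,1]))$, the unitary $u_1 \in U_0(B)$ can be approximated by a single exponential $\exp(ia_1)$ with $a_1 \in B_{s.a.}$ to arbitrarily small norm error (diagonalization in the matrix summands; exponential rank $1 + \ep$ in the $M_k(C([0,1]))$ summands, classical since $[0,1]$ is contractible). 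Setting $v_0 := \exp(ia_1) + (1-p)$ then yields a unitary in $U_0(A)$ with $|\tau(u^k) - \tau(v_0^k)| < \dt/2$ for $k \le N$ and all $\tau \in T(A)$, since the discrepancy is confined to the small-trace corner $(1-p)A(1-p)$.

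It remains to match the $U(A)/CU(A)$ class. The discrepancy $\overline{v_0^* u} \in U_0(A)/CU(A)$ corresponds via Theorem \ref{TDert} to a class in $\Aff(T(A))/\overline{\rho_A(K_0(A))}$, which by surjectivity of $\overline{\Delta}$ and the divisibility of $U_0(A)/CU(A)$ recorded in Section 2 can be realized as $\Delta(\exp(ia_2))$ for some $a_2 \in A_{s.a.}$ of arbitrarily small norm. Setting $v := \exp(i(a_1 + a_2))$ (with $a_1$ interpreted as an element of $A$ via $B \subset A$) and using Baker--Campbell--Hausdorff-type estimates to control $v - \exp(ia_1)\exp(ia_2)$ when $\|a_2\|$ is small, one verifies that $v$ still matches the tracial moments of $u$ within $\dt$ while now falling within $\dt$ of $\bar u$ in $U(A)/CU(A)$. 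The principal obstacle will be the simultaneous control of both invariants—matching all $N$ moments uniformly over the compact set $T(A)$ while adjusting the de la Harpe--Skandalis determinant without reintroducing moment errors—while keeping the exponent $a$ a single self-adjoint element of $A$; the subsequent application of \ref{Ex2} is then immediate.
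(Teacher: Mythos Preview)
Your overall strategy---apply Corollary \ref{Ex2} and then build a single exponential $v$ matching both the tracial moments and the $U_0(A)/CU(A)$-class of $u$---is exactly the paper's, and your construction of $v_0=\exp(ia_1)+(1-p)$ matching the moments is the same as well. The gap is in the determinant step. You assert that the class $\overline{v_0^*u}\in U_0(A)/CU(A)\cong \text{Aff}(T(A))/\overline{\rho_A(K_0(A))}$ can be realized as $\Delta(\exp(ia_2))$ with $\|a_2\|$ \emph{arbitrarily small}. But $\Delta(\exp(ia_2))=\Pi(\widehat{a_2})$, and since $\overline{\rho_A(K_0(A))}$ is a closed $\R$-linear subspace (Proposition \ref{div}), any representative $\widehat{a_2}$ of a given class has sup-norm at least the quotient norm of that class; hence so does $a_2$. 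Divisibility of $U_0(A)/CU(A)$ is a statement about the group structure, not about norms of representatives, and does not help here. In your setup nothing forces $\Delta(v_0^*u)$ to have small quotient norm: the corner piece $(1-p)u(1-p)$ is a unitary in $(1-p)A(1-p)$ whose exponential length you have not bounded, so its contribution to the determinant is uncontrolled. Consequently $a_2$ need not be small, and then your BCH/moment estimates in the last step collapse.

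The paper closes exactly this gap by arranging \emph{in advance} that $\text{dist}(\bar u,\bar v_0)$ is already small, so no correction $a_2$ is needed. Before invoking $TR(A)\le 1$ it fixes an expression $u=\prod_{j=1}^k\exp(ih_j)$ and sets $M=\max_j\|h_j\|+1$; the approximating projection $p$ is then chosen so that, in addition to the usual conditions, $\tau(1-p)<\dt/(2NMk)$ and $(1-p)u(1-p)$ is close to $\prod_{j}\exp\bigl(i(1-p)h_j(1-p)\bigr)$. Writing $u_0=p\exp(ib)+\prod_j\exp\bigl(i(1-p)h_j(1-p)\bigr)$ one obtains $u_0v_0^*=\prod_j\exp\bigl(i(1-p)h_j(1-p)\bigr)$ exactly, so $\Delta(u_0v_0^*)$ is represented by $\sum_j\widehat{(1-p)h_j(1-p)}$, an affine function of sup-norm at most $kM\sup_\tau\tau(1-p)<\dt/2N$. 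Since $\|u-u_0\|$ is also small, $\text{dist}(\bar u,\bar v_0)<\dt$ follows directly and \ref{Ex2} applies to $v_0$ itself. The missing ingredient in your argument is precisely this preliminary product-of-exponentials decomposition of $u$, which converts smallness of $\tau(1-p)$ into smallness of the determinant discrepancy.
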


\begin{proof}
Since $u\in U_0(A),$ we may write
\beq\label{ex3-2-}
u=\prod_{j=1}^k \exp(i h_j).
\eneq
Let $M=\max\{\|h_j\|: j=1,2,...,k\}+1.$ Let $\dt>0$ and $N$ be
given in \ref{Ex2} for $u.$ We may assume that $\dt<1$ and $N\ge
3.$ We may also assume that $\dt<\ep.$ Since $TR(A)\le 1,$ there
exists a projection $p\in A$ and a \SCA\, $B\in A$ with $1_B=p$
such that $B\cong \oplus_{i=1}^m C(X_i, M_{r(i)}),$ where
$X_i=[0,1]$ or a point, and

\beq\label{ex3-2}
&&\|pu-up\|<{\dt\over{16NMk}}, \\\label{ex3-2+1}
&&\|(1-p)u(1-p)-(1-p)\prod_{j=1}^k\exp(i((1-p)h_j(1-p))\|<{\dt\over{16NMk}}, \\\label{ex3-2+2}
&& pup\in_{{\dt\over{16NMk}}} B\andeqn
\tau(1-p)<{\dt\over{2NMk}}\tforal \tau\in T(A).
\eneq

There exist unitary $u_1\in B$ such that
\beq\label{ex3-3}
\|pup-u_1\|<{\dt\over{8NMk}}
\eneq
Put  $u_2=(1-p)\prod_{j=1}^k\exp(i(1-p)h_j(1-p)).$ Since  $u_1\in
B,$ it is well known that there exists a selfadjoint element $b\in
B_{s.a}$ such that
\beq\label{ex3-4}
\|u_1-p\exp(ib)\|<{\dt\over{16NMk}}.
\eneq
Let $v_0=(1-p)+p\exp(ib)$ and $u_0=p\exp(ib)+u_2.$  Then, by
(\ref{ex3-2}), (\ref{ex3-2+1}), (\ref{ex3-3}) and (\ref{ex3-4}),
\beq\label{ex3-5-1}
\|u_0-u\|&<&\|u-pup-(1-p)u(1-p)\|\\
&&+\|(pup-p\exp(ib))+((1-p)u(1-p)-u_2)\|\\
\label{ex3-5+2}
&<&{\dt\over{8NMk}}+{\dt\over{8NMk}}+
{\dt\over{16NMk}}={3\dt\over{8NMk}}.
\eneq
and
\beq\label{ex3-5}
u_0v_0^*=\prod_{j=1}^k \exp(i(1-p)h_j(1-p)).
\eneq
Note that
\beq\label{ex3-6}
|\tau(\sum_{j=1}^k (1-p)h_j(1-p))|&\le &\sum_{j=1}^k |\tau((1-p)h_j(1-p))|\\
&=&k\tau(1-p)\max\{\|h_j\|: j=1,2,...,k\}<\dt/16N
\eneq
for all $\tau\in T(A).$
It follows that
\beq\label{ex3-7}
{\rm dist}({\bar u_0}, {\bar v_0})<\dt/16N\,\,\,{\rm in}\,\,\, U_0(A)/CU(A).
\eneq
It follows from that
\beq\label{ex3-8}
{\rm dist}({\bar u}, {\bar v_0})<\dt/8N.
\eneq
On the other hand, for each $s=1,2,...,N,$ by  (\ref{ex3-5}), (\ref{ex3-5+2}) and (\ref{ex3-2+2})
\beq\label{ex3-9}
|\tau(u^s)-\tau(v_0^s)|&\le &
|\tau(u^s)-\tau(u_0^s)|+|\tau(u_0^s)-\tau(v_0^s)|\\\label{ex3-9+}
&\le & \|u^s-u_0^s\|+|\tau((1-p)-(1-p)\prod_{j=1}^k \exp(i(1-p)sh_j(1-p)))|\\
& \le & N\|u-u_0\|+2  \tau(1-p)\\
&<& {3\dt\over{8Mk}}+{\dt\over{MNk}}<\dt
\eneq
for all $\tau\in T(A).$ From the above inequality and  (\ref{ex3-8})
and applying \ref{Ex2}, one obtains a unitary $W\in U(A)$ such
that
\beq\label{ex3-10}
\|u-W^*v_0W\|<\ep.
\eneq
Put $a=W^*((1-p)+b)W.$  Then
\beq\label{ex3-11}
\|u-\exp(ia)\|<\ep.
\eneq

\end{proof}

Note that Theorem \ref{Ex3} does not assume that $A$ is amenable, in
particular, it may not be a simple AH-algebra. The proof used a kind
of uniqueness theorem for unitaries in a unital simple \CA\, $A$
with $TR(A)\le 1.$ This bring us to the following theorem which is
an immediate consequence of \ref{Ex2}.

\begin{thm}\label{Uni}
Let $A$ be a unital simple \CA\, with $TR(A)\le 1.$
Let $u$ and $v$ be two unitaries in $U_0(A).$ Then they are approximately
unitarily equivalent if and only if
\beq\label{Uni1}
\Delta(u)&=&\Delta(v)\andeqn\\\label{Uni2}
\tau(u^k)&=&\tau(v^k)\,\,\,\tforal \tau\in T(A),
\eneq
$k=1,2,....$
\end{thm}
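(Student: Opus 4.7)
The plan is to derive Theorem \ref{Uni} as a direct consequence of Corollary \ref{Ex2} together with the de la Harpe--Skandalis identification in Theorem \ref{TDert}. The two directions will be handled separately.

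For the necessity direction, I would argue that approximate unitary equivalence preserves both sides of the invariant. If $w_n^*vw_n \to u$ in norm, then $\tau(u^k) = \lim_n \tau((w_n^*vw_n)^k) = \tau(v^k)$ for every $\tau \in T(A)$ and every $k$, by continuity and tracial invariance. For the determinant, observe that $w_n^*vw_n = (w_n^*vw_nv^*)v$ with $w_n^*vw_nv^* \in CU(A)$ a commutator, so $\overline{w_n^*vw_n} = \overline{v}$ in $U(A)/CU(A)$. Hence $\text{dist}(\bar{u}, \bar{v}) \le \|u - w_n^*vw_n\| \to 0$, giving $\bar{u} = \bar{v}$, and thus $\Delta(u) = \Delta(v)$ by the isomorphism $\overline{\Delta}$ of Theorem \ref{TDert}.

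For the sufficiency direction, fix $\ep > 0$. Apply Corollary \ref{Ex2} to $u$ and $\ep$ to obtain a $\delta > 0$ and an integer $N \ge 1$. By hypothesis, $\tau(u^k) = \tau(v^k)$ for all $\tau \in T(A)$ and $k = 1, 2, \ldots, N$, so trivially $|\tau(u^k) - \tau(v^k)| < \delta$. By hypothesis $\Delta(u) = \Delta(v)$, and since $\overline{\Delta}: U_0(A)/CU(A) \to \text{Aff}(T(A))/\overline{\rho_A(K_0(A))}$ is an isomorphism (Theorem \ref{TDert}), we get $\bar{u} = \bar{v}$ in $U_0(A)/CU(A)$, hence $\text{dist}(\bar{u}, \bar{v}) = 0 < \delta$. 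Corollary \ref{Ex2} then produces a unitary $W \in U(A)$ with $\|u - W^*vW\| < \ep$. Since $\ep > 0$ was arbitrary, $u$ and $v$ are approximately unitarily equivalent.

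There is really no substantive obstacle here; the entire content is packaged into Corollary \ref{Ex2} and Theorem \ref{TDert}. The only small point that needs care is the observation that inner automorphisms act trivially on $U(A)/CU(A)$, so that approximate unitary equivalence immediately implies equality in $U(A)/CU(A)$; this is where the necessity of the $\Delta$-condition really comes from.
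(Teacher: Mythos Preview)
Your proposal is correct and follows exactly the route the paper indicates: the paper simply asserts that Theorem~\ref{Uni} is ``an immediate consequence of~\ref{Ex2},'' and you have filled in precisely those details, using Corollary~\ref{Ex2} for sufficiency and the obvious invariance of traces and of the class in $U(A)/CU(A)$ under inner automorphisms for necessity. Your observation that $w^*vw\,v^*$ is a group commutator, so that conjugation acts trivially on $U(A)/CU(A)$, is exactly the point needed for the necessity of the $\Delta$-condition.
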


Since ${\Delta}: U_0(A)/CU(A)\to
Aff(T(A))/\overline{\rho_A(K_0(A))}$ is an isomorphism, one may
ask if (\ref{Uni2}) implies that $\Delta(u)=\Delta(v)?$ In other
words, would  $\tau(f(u))=\tau(f(u))$ for all $f\in C(S^1)$
imply that $\Delta(u)=\Delta(v)?$  This becomes a question only in
the case that $\overline{\rho_A(K_0(A))}\not=Aff(T(A)).$ Thus we
would like to recall the following:

\begin{thm}\label{Dense}{\rm (cf. Theorem \cite{Lnplms})}\\ Let $A$ be a
unital simple \CA\, with $TR(A)\le 1.$ Then the following are
equivalent:

{\rm (1)}  $TR(A)=0,$

{\rm (2)}  $\overline{\rho_A(K_0(A))}=Aff(T(A))$ and

{\rm (3)} $CU(A)=U_0(A).$

\end{thm}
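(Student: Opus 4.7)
The equivalences (2) $\Leftrightarrow$ (3) fall out immediately from Theorem \ref{TDert}: the de la Harpe--Skandalis determinant furnishes an isomorphism
$$
\overline{\Delta}: U_0(A)/CU(A)\longrightarrow \text{Aff}(T(A))/\overline{\rho_A(K_0(A))},
$$
so one side is trivial if and only if the other is. It therefore remains to establish (1) $\Leftrightarrow$ (2).

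For (1) $\Rightarrow$ (2): when $TR(A)=0,$ $A$ has real rank zero, stable rank one, weakly unperforated $K_0,$ and strict comparison (all listed in the preliminaries). Given $f\in \text{Aff}(T(A))_+$ and $\ep>0,$ one first finds a positive $b\in M_k(A)$ with $\widehat{b}$ within $\ep/2$ of $f$ uniformly on $T(A)$ (simplicity and density of $\widehat{(M_k(A))_+}$ in $\text{Aff}(T(M_k(A)))_+$), and then inside $\overline{bM_k(A)b}$ approximates $b$ in norm by a projection $q;$ the resulting $\rho_A([q])$ lies within $\ep$ of $f,$ proving density.

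For (2) $\Rightarrow$ (1), the main work: given a finite ${\cal F}\subset A,$ $\ep>0,$ and $a\in A_+\setminus\{0\},$ apply $TR(A)\le 1$ with parameters $(\ep/3,{\cal F},a)$ to obtain a projection $p$ and a subalgebra $B\cong \oplus_{i=1}^m M_{r(i)}(C(X_i))$ with $1_B=p$ satisfying the three defining conditions. Blocks with $X_i$ a point are already finite-dimensional. For each block $B_i=M_{r(i)}(C([0,1]))$ with unit $p_i,$ I would choose a partition $0=t_0<t_1<\cdots<t_n=1$ fine enough that the images of ${\cal F}$ in $B_i$ lie within $\ep/3$ of the step-function matrix subalgebra on this partition, then invoke hypothesis (2) to construct inductively a mutually orthogonal family $\{q_{i,j}\}_{j=1}^{n}$ of projections in $A$ with $\sum_j q_{i,j}=p_i$ and $\tau(q_{i,j})$ approximating $\tau(1_{M_{r(i)}}\otimes \chi_{[t_{j-1},t_j]})$ uniformly over $\tau\in T(A).$ Together with matrix units drawn from each $M_{r(i)},$ these generate a finite-dimensional $*$-subalgebra $F\subset A$ approximately containing ${\cal F},$ verifying $TR(A)=0.$

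The central obstacle is the construction of the orthogonal family $\{q_{i,j}\};$ it cannot live in $B_i$ itself (which has no nontrivial projections beyond those in its $M_{r(i)}$ factor), so one must work in $A$ and match traces carefully. Hypothesis (2) supplies single projections in $A$ with prescribed trace values, while the weak unperforation of $K_0(A),$ stable rank one, and the comparison of projections by trace (all recorded in the preliminaries for $TR(A)\le 1$) allow an inductive refinement: at each stage one subtracts a projection of the correct trace from $p_i-\sum_{j<l}q_{i,j}$ using strict comparison, so that orthogonality and the prescribed sum are preserved.
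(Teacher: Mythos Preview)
The paper does not supply its own proof of this theorem; it is quoted from \cite{Lnplms}. I can nonetheless assess your argument on its own merits.

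Your derivation of (2)$\Leftrightarrow$(3) from Theorem~\ref{TDert} is correct.

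In (1)$\Rightarrow$(2) there is a slip: one cannot ``approximate $b$ in norm by a projection $q$'' inside $\overline{bM_k(A)b}$. Real rank zero yields approximation of self-adjoints by elements of \emph{finite spectrum}, not by projections; a projection within small norm of $b$ would force $b$ itself to be nearly idempotent. The argument can be repaired (for instance by taking a finite-spectrum approximant $\sum\lambda_j p_j$ and then using comparison and divisibility to assemble a single projection with trace close to $\sum\lambda_j\tau(p_j)$), but not as written.

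The argument for (2)$\Rightarrow$(1) has a genuine gap. You produce mutually orthogonal projections $q_{i,j}\in A$ summing to $p_i$ with $\tau(q_{i,j})$ matching the traces that the (nonexistent) interval projections $1_{M_{r(i)}}\otimes\chi_{[t_{j-1},t_j]}$ would have. But having the correct trace places no constraint on how the $q_{i,j}$ sit relative to $B_i$: there is no reason they approximately commute with elements of $B_i$, nor that the compressions $p_ixp_i$ of elements $x\in{\cal F}$ lie near the finite-dimensional algebra $F$ generated by the $q_{i,j}$ and the matrix units of $M_{r(i)}$. For a concrete failure, take $b\in B_i$ corresponding to the function $t\mapsto t\cdot 1_{M_{r(i)}}$: nothing in your construction forces any element of $F$ to be close to $b$. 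Trace data alone cannot position projections; you would need the $q_{i,j}$ to behave like spectral projections of the coordinate function in $B_i$, and hypothesis~(2) gives no handle on that. The passage from the tracial-rank-one approximation to a tracial-rank-zero approximation under hypothesis~(2) requires a different mechanism than the one you sketch.
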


However, when $TR(A)=1,$ at least, one has the following:

\begin{prop}\label{div}
Let $A$ be a unital simple infinite dimensional \CA\, with $TR(A)\le 1.$
If $a\in {\overline{\rho_A(K_0(A))}},$ then
\beq\label{div1}
ra\in {\overline{\rho_A(K_0(A))}}
\eneq
for all $r\in \R.$ In fact, ${\overline{\rho_A(K_0(A))}}$ is a closed $\R$-linear subspace of
$Aff(T(A)).$

\end{prop}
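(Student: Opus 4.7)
The plan is to reduce everything to the single claim that for any projection $p\in A$ and any positive integer $n$, the function $(1/n)\widehat{p}$ lies in $\overline{\rho_A(K_0(A))}$. Once this is established, the conclusion follows formally: the positive cone of $K_0(A)$ is generated (as a semigroup) by classes of projections in $M_k(A)$ (and an argument in $M_k(A)$, whose tracial rank is also $\le 1$ by \ref{c1}, handles that case identically), so $(1/n)\rho_A(x)\in\overline{\rho_A(K_0(A))}$ for every $x\in K_0(A)$; passing to the closure gives $(1/n)a\in\overline{\rho_A(K_0(A))}$ for every $a\in\overline{\rho_A(K_0(A))}$, hence $\Q\cdot a\subset\overline{\rho_A(K_0(A))}$, and finally $\R\cdot a\subset\overline{\rho_A(K_0(A))}$ by closure. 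Combined with the fact that $\rho_A(K_0(A))$ is a subgroup, this makes $\overline{\rho_A(K_0(A))}$ a closed $\R$-linear subspace of $\text{Aff}(T(A))$.

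To carry out the main claim, fix $p$, $n$, and $\ep>0$. Using $TR(A)\le 1$ with a prescribed tolerance $\ep'\ll\ep/n$, produce a projection $e\in A$ and a \SCA\, $B\subset A$ with $1_B=e$, $B\cong\bigoplus_{i=1}^m M_{r_i}(C(X_i))$ and $X_i\in\{[0,1],\text{pt}\}$, such that $\|ep-pe\|<\ep'$, $\operatorname{dist}(epe,B)<\ep'$, and $\tau(1-e)<\ep'$ for all $\tau\in T(A)$. A standard functional-calculus polishing then furnishes a projection $q\in B$ with $\|q-epe\|$ small, and the third estimate forces $|\tau(q)-\tau(p)|<2\ep'$ uniformly in $\tau$.

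Next, split $q$ into $n$ subprojections with nearly equal traces. In each summand $M_{r_i}(C(X_i))$, $q$ restricts to a constant-rank projection $q_i$ (as $X_i$ is connected), and since vector bundles over $[0,1]$ or a point are trivial, we may decompose $q_i=q_{i,1}+\cdots+q_{i,n}$ with $\operatorname{rank}(q_{i,j})\in\{\lfloor\operatorname{rank}(q_i)/n\rfloor,\lceil\operatorname{rank}(q_i)/n\rceil\}$. Setting $q_j=\sum_i q_{i,j}\in A$, each $q_j$ is a projection and, writing $\tau|_{B_i}=\alpha_i\tau_i$ for the normalized trace $\tau_i$ on $B_i$,
$$
\bigl|\tau(q_j)-\tau(q)/n\bigr| \;=\;\Bigl|\sum_i \alpha_i\bigl(\operatorname{rank}(q_{i,j})-\operatorname{rank}(q_i)/n\bigr)/r_i\Bigr| \;\le\;\sum_i \alpha_i/r_i.
$$
Thus $\widehat{q_j}\in\rho_A(K_0(A))$ is uniformly within $\ep$ of $(1/n)\widehat{p}$, provided we can ensure $\sum_i\alpha_i/r_i$ is small.

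The principal obstacle is arranging $\sum_i\alpha_i/r_i<\ep/2$. This is handled by the standard strengthening of the definition of $TR(A)\le 1$ for unital, simple, infinite-dimensional $A$: the matrix sizes $r_i$ of the approximating \SCA\ $B$ may be chosen to exceed any prescribed integer $N$. (Any summand with small $r_i$ can be absorbed into the corner $1-e$ at the cost of enlarging $1-e$, while keeping $\tau(1-e)$ as small as required; this uses infinite dimensionality and simplicity to supply small projections to absorb.) Choosing $N>2/\ep$ then gives $\sum_i\alpha_i/r_i\le 1/N<\ep/2$, completing the argument.
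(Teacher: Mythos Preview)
Your argument is correct but proceeds differently from the paper. The paper's proof is a one-liner modulo a citation: it invokes a divisibility lemma from \cite{Lnctr1} (Theorem~5.4 or Lemma~5.5) asserting that in such an $A$ any projection $p$ decomposes as $p=q_0+p_1+\cdots+p_n$ with $[p_1]=\cdots=[p_n]$ and $[q_0]\le[p_1]$; setting $q=\sum_{i=1}^m p_i$ for an appropriate $m/n\approx r$ then yields the approximating projection directly. You instead work from the bare definition of $TR(A)\le 1$, pushing $p$ into an interval algebra $B$ and splitting there. This is more self-contained in spirit, but the ``standard strengthening'' you invoke (that the matrix sizes $r_i$ may be taken arbitrarily large) is itself essentially equivalent in strength to the divisibility lemma the paper quotes, so neither route is genuinely more elementary than the other. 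Two small corrections are in order: your reference to \ref{c1} for $TR(M_k(A))\le 1$ is misplaced (that corollary concerns $U/CU$; the fact you need is in \cite{Lnplms}); and your parenthetical justification for large matrix sizes (``absorb summands with small $r_i$ into $1-e$'') does not work as written, since a summand with small $r_i$ need not have small trace, so absorbing it can spoil $\tau(1-e)<\ep'$. The correct argument first uses tracial rank one to split $1_A$ into $N$ equivalent pieces plus a small remainder, and then runs the $TR\le 1$ approximation inside one piece before tensoring back up---which is precisely how the lemma cited by the paper is established.
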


\begin{proof}
Note that ${\overline{\rho_A(K_0(A))}}$ is an additive subgroup of $Aff(T(A)).$
It suffices to prove the following: Given any projection $p\in A,$  any real number $0<r_1<1$ and $\ep>0,$
there exists a projection $q\in A$ such that
\beq\label{dvi2}
|r_1\tau(p)-\tau(q)|<\ep\tforal \tau\in T(A).
\eneq
Choose $n\ge 1$ such that
\beq\label{dvi3}
|m/n-r_1|<\ep/2\andeqn 1/n<\ep/2
\eneq
for some $1\le m<n.$

Note that $TR(pAp)\le 1.$ By Theorem 5.4 or  Lemma 5.5 of \cite{Lnctr1}, there are mutually orthogonal projections
$q_0, p_1,p_2,...,p_n$ with $[q_0]\le [p_1]$ and $[p_1]=[p_i],$ $i=1,2,...,n$ and $\sum_{i=1}^n p_i+q_0=p.$
Put $q=\sum_{i=1}^m p_i.$ We then compute that
\beq\label{dvi4}
|r_1\tau(p)-\tau(q)|<\ep\tforal \tau\in T(A).
\eneq

\end{proof}

\begin{thm}\label{CEX}
Let $A$ be a unital simple infinite dimensional \CA\, with $TR(A)= 1.$
Then there exist  unitaries $u, v\in U_0(A)$ with
$$
\tau(u^k)=\tau(v^k)\tforal \tau\in T(A),\,\,\,k=0,\pm 1, \pm 2,...,\pm n,...
$$
such that $ \Delta(u)\not=\Delta(v).$
In particular, $u$ and $v$ are not approximately unitarily equivalent.
\end{thm}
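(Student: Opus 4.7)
Since $TR(A)=1$, Theorem \ref{Dense} gives $\overline{\rho_A(K_0(A))}\subsetneq \Aff(T(A))$; pick $\xi$ in the complement. Because $A$ is unital simple with $TR(A)\le 1$, the map $A_{s.a.}\to \Aff(T(A))$, $a\mapsto \widehat a$, has norm-dense image (this is a standard consequence of the Cuntz--Pedersen / Goodearl--Handelman theory in the presence of strict comparison). So I can choose $h\in A_{s.a.}$ with $\widehat h$ close enough to $2\pi\xi$ that $\widehat h/(2\pi)$ still lies outside the closed set $\overline{\rho_A(K_0(A))}$. By Proposition \ref{div} this set is a closed real subspace of $\Aff(T(A))$, so rescaling $h$ by any nonzero real preserves the property; in particular I may assume $\|h\|$ is as small as I wish. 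Set $u=\exp(ih)$. Then $u\in U_0(A)$ and $\Delta(u)=\widehat h/(2\pi)\neq 0$ in $\Aff(T(A))/\overline{\rho_A(K_0(A))}$.

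The main task is to build $v\in CU(A)$ (equivalently $\Delta(v)=0$, by Theorem \ref{T1}) whose moments match those of $u$: $\tau(v^k)=\tau(u^k)$ for every $\tau\in T(A)$ and every $k\in \Z$. First note that any finite-spectrum unitary $w=\sum_{j=1}^m e^{i\theta_j}p_j$ lies in $CU(A)$: writing $w=\exp(i\sum_j \theta_j p_j)$, the element $\widehat{\sum_j \theta_j p_j}=\sum_j \theta_j \widehat{p_j}$ lies in the $\R$-linear span of $\rho_A(K_0(A))$, hence in $\overline{\rho_A(K_0(A))}$ by Proposition \ref{div}, so Theorem \ref{T1}(5) gives $w\in CU(A)$. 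Because $CU(A)$ is norm-closed, any norm limit of finite-spectrum unitaries also lies in $CU(A)$.

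To realize the moment map $\gamma_u(f)(\tau):=\tau(f(u))$ by such a $v$, I invoke the existence counterpart of the uniqueness Corollary \ref{Ex2}: given the positive affine continuous map $\gamma_u:C(\T)_{s.a.}\to \Aff(T(A))$ arising from the unitary $u$ and any prescribed class $\bar d\in U_0(A)/CU(A)$, there is $v\in U_0(A)$ with $\tau(f(v))=\gamma_u(f)(\tau)$ for all $f,\tau$ and $\bar v=\bar d$. Applying this with $\bar d=0$ produces the desired $v\in CU(A)$. Concretely one proceeds by choosing finite-spectrum unitaries $w_n$ whose moments approximate $\gamma_u$ to tolerance $1/n$ on ever-larger finite subsets of $C(\T)_{s.a.}$, then using Corollary \ref{Ex2} to inner-perturb the $w_n$ into a norm-Cauchy sequence; its limit $v\in CU(A)$ exactly realizes $\gamma_u$.

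With $u$ and $v$ so constructed, Theorem \ref{Uni} immediately gives the conclusion: they have identical moments yet $\Delta(u)\neq 0=\Delta(v)$, so $u$ and $v$ are not approximately unitarily equivalent. The principal obstacle is the existence step — arranging finite-spectrum approximants whose moments genuinely converge to $\gamma_u$, and perturbing them into a Cauchy sequence — which rests on the approximate divisibility and strict comparison available from $TR(A)\le 1$ together with the uniqueness machinery established earlier in the paper and in \cite{LM}.
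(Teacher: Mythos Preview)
Your overall strategy is right, and the abstract existence statement you invoke (``given $\gamma_u$ and any prescribed class $\bar d$, there is $v$ realizing both'') is precisely what the paper uses, via Theorem~8.4 of \cite{Lnncl}. The gap is in the concrete justification you offer for that existence step.

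For any finite-spectrum unitary $w=\sum_j e^{i\theta_j}p_j$ and any real $f\in C(\T)$ one has $\widehat{f(w)}=\sum_j f(e^{i\theta_j})\,\widehat{p_j}$, which lies in the closed $\R$-linear subspace $\overline{\rho_A(K_0(A))}$ by Proposition~\ref{div}. Thus moments of finite-spectrum unitaries can approximate a moment map $\gamma$ only if $\gamma(f)\in\overline{\rho_A(K_0(A))}$ for every real $f$, i.e.\ only if $\Gamma=0$. Your $u=\exp(ih)$ fails this: $\widehat{{\rm Im}(u)}=\widehat{\sin h}$, and after your rescaling $\|\sin h-h\|\le\|h\|^3/6$ is cubic in $\|h\|$ while ${\rm dist}(\widehat{h},\overline{\rho_A(K_0(A))})$ scales linearly, so $\widehat{\sin h}\notin\overline{\rho_A(K_0(A))}$ once $\|h\|$ is small. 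Hence $\Gamma(u)\neq 0$ and no sequence of finite-spectrum $w_n$ can have moments converging to $\gamma_u$; approximate divisibility and strict comparison do not help here---this is an obstruction, not a mere technical obstacle.

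The paper sidesteps this by applying the existence theorem symmetrically to build \emph{both} unitaries: fix one affine $s:T(A)\to T_f(C(\T))$ and two distinct homomorphisms $\kappa_1,\kappa_2:K_1(C(\T))\to U_0(A)/CU(A)$ (possible since $U_0(A)/CU(A)\neq 0$), then obtain monomorphisms $\phi_i:C(\T)\to A$ with $\phi_i^\natural=s$ but $\phi_i^\ddag$ differing on the generator, and set $u=\phi_1(z)$, $v=\phi_2(z)$. Your argument is repaired simply by citing that existence theorem for $v$ in place of the finite-spectrum sketch; if you insist on the finite-spectrum route you must first arrange $\Gamma(u)=0$ while keeping $\Delta(u)\neq 0$ (cf.\ Proposition~\ref{LP}), but producing such a $u$ already requires the same existence theorem.
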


\begin{proof}
Since we assume that $TR(A)=1,$ then, by \ref{Dense},
$Aff(T(A))\not=\overline{\rho_A(K_0(A))}$ and $U_0(A)/CU(A)$ are 
not trivial.

Let $\kappa_1,\, \kappa_2: K_1(C(\T))\to U_0(A)/CU(A)$ be two different \hm s.
Fix   an affine continuous map $s:
T(A)\to T_{f}(C(\T)),$ where $T_{f}(C(\T))$ is the space of
strictly positive normalized Borel measures on $\T.$
Denote by $\gamma_0: \text{Aff}(T(C(\T)))\to \text{Aff}(T(A))$ the positive affine continuous map
induced by $\gamma_0(f)(\tau)=f(s(\tau))$ for all $f\in  \text{Aff}(T(C(\T)))$ and $\tau\in T(A).$
Let
$$
\overline{\gamma_0}: U_0(C(\T))/CU(C(\T))=\text{Aff}(T(C(\T)))/\Z\to
\text{Aff}(T(A))/\overline{\rho_A(K_0(A))}=U_0(A)/CU(A)
$$
be the map induced by $\gamma_0.$
Write
$$
U(C(\T))/CU(C(\T))=U_0(C(\T))/CU(C(\T))\oplus K_1(C(\T)).
$$
Define $\lambda_i: U(C(\T))/CU(C(\T))\to U_0(A)/CU(A)$ by
$$\lambda_i(x\oplus z)=\overline{\gamma_0}(x)+\kappa_i(z)$$ for $x\in U_0(C(\T))/CU(C(\T))$ and
$z\in K_1(C(\T)),$ $i=1,2.$
 It follows
from 8.4 of \cite{Lnncl} that there are two unital monomorphisms
$\phi_1, \phi_2: C(\T)\to A$ such that
\beq\label{CEX1}
(\phi_1)_{*i}=0,\, \phi_i^{\ddag}=\lambda_i\andeqn
\phi_i^{\natural}=s,
\eneq
$i=1,2.$ Let $z$ be the standard unitary generator of $C(S^1).$
Define $u=\phi_1(z)$ and $v=\phi_2(z).$ Then $u, \, v\in U_0(A).$
The condition that $\phi_i^{\natural}=s$ implies that
$$
\tau(u^k)=\tau(v^k)\tforal \tau\in T(A), \,\,\,k=0, \pm 1, \pm 2,..., \pm n,....
$$
But since $\lambda_1\not=\lambda_2,$
$$
\Delta(u)\not=\Delta(v).
$$
Therefore $u$ and $v$ are not approximately unitarily equivalent.

\end{proof}

\begin{rem}\label{R3}

{\rm Given any continuous affine map $s: T(A)\to T_{f}(C(\T)),$ let
$\gamma_0: \text{Aff}(T(C(\T)))\to \text{Aff}(T(A))$ by defined by
$\gamma_0(f)(\tau)=f(s(\tau))$ for all $f\in \text{Aff}(T(C(\T)))$
and $\tau\in T(A).$ This further induces a \hm\, $\lambda:
U_0(C(\T))/CU(C(\T))\to U_0(A)/CU(A).$

Given any element $x\in \text{Aff}(T(A))/\overline{\rho_A(K_0(A))},$
the proof of the above theorem actually says that there is a unitary
$u\in U_0(A)$ such that $\Delta(u)=x$ and
$$
\tau(f(u))=f(s(\tau))
$$
for all $f\in C(\T)_{s.a}$ and $\tau\in T(A).$ Moreover, $u$ induces
$\lambda.$

}

\end{rem}

\section{Approximated by unitaries with finite spectrum}

Now we consider the problem when a unitary $u\in U_0(A)$ in a unital simple infinite dimensional
\CA\, $A$ with $TR(A)\le 1$ can be approximated by unitaries with finite spectrum. When
$TR(A)=0,$ $A$ has real rank zero, it was proved (\cite{LnFU}) that every unitary in $U_0(A)$ can be
approximated by unitaries with finite spectrum. When, $TR(A)=1,$ even a selfadjoint element in $A$ may not be approximated by those selfadjoint with finite spectrum. As stated in \ref{Dense}, in this case,
$\rho_A(K_0(A))$ is not dense in $Aff(T(A)).$ It turns out that that is the only issue.

\begin{lem}\label{app0}
Let $A$ be a unital separable simple infinite dimensional \CA\, with $TR(A)\le 1$ and let
$h\in A$ be a self-adjoint element. Then $h$ can be approximated by
self-adjoint elements with finite spectrum if and only if
$\widehat{h^n}\in \overline{\rho_A(K_0(A))},$ $n=1,2,....$
\end{lem}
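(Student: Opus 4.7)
The necessity direction will be immediate from Proposition~\ref{div}: if $h=\lim_n h_n$ with each $h_n=\sum_k\lambda_{k,n}q_{k,n}$ a finite-spectrum selfadjoint element (the $q_{k,n}\in A$ pairwise orthogonal projections summing to $1_A$), then $\widehat{h_n^m}=\sum_k\lambda_{k,n}^m\widehat{q_{k,n}}$ lies in the real-linear span of $\rho_A(K_0(A))$, which by Proposition~\ref{div} is contained in the closed $\R$-linear subspace $\overline{\rho_A(K_0(A))}$ of $\Aff(T(A))$; since $a\mapsto\widehat{a^m}$ is norm-continuous, $\widehat{h^m}\in\overline{\rho_A(K_0(A))}$ for every $m$.

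For the sufficiency direction, my plan is to build a finite-spectrum model element $h_0\in A_{s.a.}$ whose trace moments match those of $h$, and then transport $h_0$ onto $h$ by an approximate unitary conjugation obtained from the uniqueness theorem~\ref{Ex2} applied to $e^{ih}$ and $e^{ih_0}$. The first preparatory move is to upgrade the hypothesis: by Stone--Weierstrass and the $\R$-linearity of $\overline{\rho_A(K_0(A))}$ granted by Proposition~\ref{div}, the assumption $\widehat{h^n}\in\overline{\rho_A(K_0(A))}$ for all $n\ge 1$ promotes to $\widehat{f(h)}\in\overline{\rho_A(K_0(A))}$ for every real $f\in C(sp(h))$. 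I would then partition $sp(h)$ by $a_0<a_1<\cdots<a_N$ of mesh $<\ep/4$, fix a positive continuous partition of unity $\{f_j\}_{j=1}^N$ subordinate to $\{[a_{j-1},a_j]\}$, and set $r_j:=\widehat{f_j(h)}$, obtaining elements of $\overline{\rho_A(K_0(A))}_+$ with $\sum_jr_j=\widehat{1}$.

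The central technical step is to realise the $r_j$ (up to small uniform error) as traces of pairwise orthogonal projections $p_1,\dots,p_N\in A$ summing to $1_A$. I would do this inductively: having constructed $p_1,\dots,p_{j-1}$, work inside the corner $(1-p_1-\cdots-p_{j-1})A(1-p_1-\cdots-p_{j-1})$, which is again a unital simple separable infinite-dimensional \CA\ with $TR\le 1$, and use the cutting argument from the proof of Proposition~\ref{div} (together with stable rank one, cancellation and weak unperforation of $K_0$, which jointly let one approximate any affine function in $\overline{\rho_A(K_0(A))}$ strictly between $0$ and the trace of the unit by traces of honest projections) to produce a projection there whose trace approximates the restriction of $r_j$. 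Setting $h_0:=\sum_{j=1}^Na_jp_j$ gives a finite-spectrum selfadjoint, and a direct comparison of $\tau(h_0^n)=\sum_ja_j^n\tau(p_j)$ with $\tau\bigl(\sum_ja_j^nf_j(h)\bigr)\approx\tau(h^n)$ shows that the moments of $h$ and $h_0$ agree uniformly in $\tau\in T(A)$ to within any prescribed $\dt>0$ on any chosen finite range of $n$.

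The closing move is the uniqueness argument via exponentiation. After rescaling so that $\|h\|,\|h_0\|<\pi$, put $u:=e^{ih}$ and $v:=e^{ih_0}$. The matching of trace moments of $h,h_0$ converts (via the power series for $z\mapsto z^k$) into $|\tau(u^k)-\tau(v^k)|<\dt$ for $k=1,\dots,N$, while $\Delta(u)=\Delta(v)$ in $\Aff(T(A))/\overline{\rho_A(K_0(A))}$ because the hypothesis at $n=1$ puts both $\widehat{h}$ and $\widehat{h_0}$ in $\overline{\rho_A(K_0(A))}$ (cf.\ Theorem~\ref{TDert}); hence $\dist(\bar{u},\bar{v})=0$ in $U_0(A)/CU(A)$. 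Corollary~\ref{Ex2} then furnishes a unitary $W\in A$ with $\|u-W^*vW\|$ arbitrarily small, and Lipschitz behaviour of $\log$ on the relevant spectral neighbourhood converts this into $\|h-W^*h_0W\|<\ep$; since $W^*h_0W$ has the same finite spectrum as $h_0$, we are done. The main obstacle I anticipate is the inductive trace-prescribed construction of the $p_j$, where care is needed to keep the errors from accumulating uniformly across the $N$ steps.
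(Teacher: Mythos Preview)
Your proof is correct, and it takes a genuinely different route from the paper's own argument. The paper does not build the finite-spectrum model $h_0$ by hand inside $A$; instead it imports a unital simple AH-algebra $B$ of tracial rank zero with the same ordered $K$-theory as $A$, embeds $B\hookrightarrow A$, and then invokes an external existence lemma (Lemma~5.1 of \cite{LnMZ}) to produce a monomorphism $\phi:C(X)\to B$ with the prescribed tracial data. The approximate unitary equivalence of $\imath\circ\phi$ and $f\mapsto f(h)$ is then obtained from a general uniqueness theorem (Corollary~11.7 of \cite{Lnapp1}), and the approximation by finite-spectrum elements is delegated entirely to the real-rank-zero world inside $B$.

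Your argument trades that external machinery for a direct construction: you manufacture the orthogonal projections $p_j$ inside $A$ using only Proposition~\ref{div}, weak unperforation and strict comparison, and you close with the paper's own Corollary~\ref{Ex2} after exponentiating. This is more self-contained---everything you use is already in the paper---at the cost of the inductive projection-building step you correctly flag as the delicate point (it works: since $A$ is simple and each $f_j(h)\neq 0$, every $r_j$ is strictly positive on $T(A)$, so one can first approximate $r_1,\dots,r_{N-1}$ by $\rho_A(x_j)$ with each $x_j>0$ in $K_0(A)$, set $x_N=[1_A]-\sum_{j<N}x_j$, and then realise the $x_j$ as classes of pairwise orthogonal projections via strict comparison). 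The paper's approach buys conceptual economy by outsourcing the finite-spectrum approximation to a real-rank-zero subalgebra; yours buys independence from the classification-flavoured embedding $B\hookrightarrow A$ and from \cite{LnMZ}.
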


\begin{proof}
If $h$ can be approximated by self-adjoint elements so can  $h^n.$ By \ref{div},
$\overline{\rho_A(K_0(A))}$ is a closed linear subspace.
Therefore $\widehat{h^n}\in \overline{\rho_A(K_0(A))}$ for all $n.$

Now we assume that $\widehat{h^n}\in \overline{\rho_A(K_0(A))},$
$n=1,2,....$ The Stone-Weierstrass theorem implies that
$\widehat{f(h)}\in \overline{\rho_A(K_0(A))}$ for all real-value
functions $f\in C(sp(h)).$ For any $\ep>0,$ by Lemma 2.4 of
\cite{Lnctr1}, there is $f\in C(sp(x))_{s.a.}$ such that
$$
\|f(h)-h\|<\ep
$$
and $sp(f(h))$ consists of a union of finitely many closed
intervals and finitely many points.

Thus, to simplify notation, we may assume that $X=sp(h)$ is a
union of finitely many intervals and finitely many points. Let
$\psi: C(X)\to A$ be the \hm\, defined by $\psi(f)=f(h).$ Let $s:
T(A)\to T_{f}(C(X))$ be the affine map defined by
$f(s(\tau))=\psi(f)(\tau)$ for all $f\in \text{Aff}(C(X))$ and
$\tau\in T(A).$

 Let $B$
be a unital simple AH-algebra with real rank zero, stable rank one
and
$$
(K_0(B), K_0(B)_+, [1_B], K_1(B))\cong (K_0(A), K_0(A)_+, [1_A],
K_1(A)).
$$
In particular, $K_0(B)$ is weakly unperforated. The proof of Theorem
10.4 of \cite{Lnctr1} provides a unital \hm\, $\imath: B\to A$ which
carries the above identification. This can be done by applying
Proposition 9.10 of \cite{Lnctr1} and the uniqueness theorem Theorem
8.6 of \cite{Lnctr1}, or better by corollary 11.7 of \cite{Lnapp1}
because $TR(B)=0,$ the maps $\phi^{\ddag}$ is not needed since
$U(B)=CU(B)$
%(see the first part of the proof of \ref{app})
and the
map on traces is determined by the map on $K_0(B).$ This also
follows immediately from Lemma 8.5 of \cite{Lnncl}.

Note that $\text{Aff}(T(B))=\overline{\rho_B(K_0(B))}.$ By
identifying $B$ with a unital \SCA\, of $A,$ we may write
$\overline{\rho_B(K_0(B))}=\overline{\rho_A(K_0(A))}.$

Let $\psi^{\natural}: \text{Aff}(T(C(X)))\to
\overline{\rho_A(K_0(A))}$ be the map induced by $\psi.$ This gives
a an affine map $\gamma: \text{Aff}(T(C(X)))\to
\overline{\rho_B(K_0(B))}.$  It follows from Lemma 5.1 of
\cite{LnMZ} that there exists a unital monomorphism $\phi: C(X)\to
B$ such that
$$
\imath\circ \phi_{*0}=\psi_{*0}\andeqn (\imath\circ
\phi)^{\natural}=\psi^{\natural},
$$
where $(\imath\circ \phi)^{\natural}: \text{Aff}(T(C(X)))\to
\text{Aff}(T(A))$ defined by $(\imath\circ
\phi)^{\natural}(a)(\tau)=\tau(\imath\circ \phi)(a)$ for all $a\in
A_{s.a.}.$  It follows from Corollary 11.7 of \cite{Lnapp1} that
$\psi$ and $\imath\circ \phi$ are approximately unitarily
equivalent. On the hand, since $B$ has real rank zero, $\phi$ can
be approximated by \hm s with finite dimensional range. It follows
that $h$ can be approximated by self-adjoint elements with finite
spectrum.

\end{proof}

\begin{thm}\label{app}
Let $A$ be a unital separable simple infinite dimensional \CA\, with $TR(A)\le 1$ and let
$u\in U_0(A).$ Then $u$ can be approximated by unitaries with finite
spectrum if and only if $u\in CU(A)$ and
$$
\widehat{u^n+ (u^{n})^*}, \,i(\widehat{u^n-(u^{n})^*})\in
\overline{\rho_A(K_0(A))},\,\,\,n=1,2,....
$$
\end{thm}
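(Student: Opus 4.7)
The necessity is direct. Any finite-spectrum unitary $v\in A$ admits a decomposition $v=\sum_k e^{i\theta_k}p_k$ with mutually orthogonal projections $p_k$ summing to $1$, hence $v=\prod_k \exp(i\theta_k p_k)$, and $\sum_k \theta_k\widehat{p_k}$ lies in $\overline{\rho_A(K_0(A))}$ by Proposition \ref{div}; Theorem \ref{T1}(5) then gives $v\in CU(A)$. Furthermore $v^n+(v^n)^*=\sum_k 2\cos(n\theta_k)p_k$ and $i(v^n-(v^n)^*)=-\sum_k 2\sin(n\theta_k)p_k$ are real-linear combinations of projections, so by \ref{div} their $\widehat{\cdot}$-images again lie in $\overline{\rho_A(K_0(A))}$. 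Since $CU(A)$ and $\overline{\rho_A(K_0(A))}$ are closed, both properties pass to norm limits, establishing necessity.

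For sufficiency, the plan is: fix $\ep>0$, let $\dt>0$ and $N\ge 1$ be the constants furnished by Corollary \ref{Ex2}, and exhibit a finite-spectrum unitary $u'\in U_0(A)$ with $|\tau(u^k)-\tau((u')^k)|<\dt$ for $k=1,\dots,N$ and all $\tau\in T(A)$, together with $\di(\bar u,\bar{u'})<\dt$ in $U_0(A)/CU(A)$; then the unitary conjugate of $u'$ produced by \ref{Ex2} will lie within $\ep$ of $u$ and still have finite spectrum. Model $u$ by $\psi:C(\T)\to A$, $\psi(z)=u$. Since the real span of $\{1,\,z^n+\bar z^n,\,i(z^n-\bar z^n):n\ge 1\}$ is dense in $C(\T)_{s.a.}$ and $\overline{\rho_A(K_0(A))}$ is a closed $\R$-linear subspace of $\text{Aff}(T(A))$ by \ref{div}, the trace hypotheses give $\psi^{\natural}(\text{Aff}(T(C(\T))))\subset\overline{\rho_A(K_0(A))}$. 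Moreover $u\in U_0(A)$ forces $\psi_{*1}=0$, and $u\in CU(A)$ combined with the preceding inclusion forces $\psi^{\ddag}=0$.

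Following the template of the proof of \ref{app0}, pick a unital simple AH-algebra $B$ of real rank zero and stable rank one whose Elliott invariant is isomorphic to that of $A$, together with a unital monomorphism $\imath:B\to A$ realizing this identification; under $\imath^{\natural}$, $\text{Aff}(T(B))=\overline{\rho_B(K_0(B))}$ is identified with $\overline{\rho_A(K_0(A))}$. Lift $\psi^{\natural}$ through this identification to an affine map $\gamma:\text{Aff}(T(C(\T)))\to\text{Aff}(T(B))$, and apply Lemma 5.1 of \cite{LnMZ} to construct a unital monomorphism $\phi:C(\T)\to B$ with $\phi^{\natural}=\gamma$, $\phi_{*0}$ matching $\psi_{*0}$, and $\phi_{*1}=0$. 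Because $U(B)=CU(B)$ by \ref{Dense}, $\phi^{\ddag}=0$ is automatic, so $\imath\circ\phi$ and $\psi$ agree on the full $(K_*,\natural,\ddag)$-invariant. The uniqueness theorem behind \ref{EX2-1} (cf.\ Corollary 11.7 of \cite{Lnapp1}) then yields a unitary $W\in A$ making $\|W^*\psi(z)W-\imath(\phi(z))\|$ as small as desired. Finally, since $B$ has real rank zero, \cite{LnFU} approximates $\phi(z)\in U_0(B)$ by finite-spectrum unitaries of $B$; their $\imath$-images are finite-spectrum unitaries of $A$ approximating $\imath(\phi(z))$, from which the desired $u'$ is extracted.

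The principal obstacle will be the match-and-lift step: producing $\phi$ with prescribed $(\phi^{\natural},\phi_{*0},\phi_{*1})$ and verifying that the three hypotheses---$u\in CU(A)$, $[u]_1=0$ in $K_1(A)$, and the trace conditions---are precisely what is needed for $\psi$ and $\imath\circ\phi$ to agree on every invariant required by the uniqueness theorem, with particular care for the $K_1(C(\T))$-contribution to $\psi^{\ddag}$ through the splitting from Theorem \ref{TDert}.
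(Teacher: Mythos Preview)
Your proposal is correct and follows essentially the same route as the paper: necessity via Proposition~\ref{div} and Theorem~\ref{T1}, and sufficiency by embedding a tracial-rank-zero simple AH-algebra $B\subset A$ realizing the Elliott invariant, lifting the trace data through Lemma~5.1 of \cite{LnMZ} to a monomorphism $\phi:C(\T)\to B$ with $\phi_{*1}=0$, applying the uniqueness theorem (Corollary~11.7 of \cite{Lnapp1}, equivalently Theorem~\ref{Uni}) to make $\imath\circ\phi$ approximately unitarily equivalent to $\psi$, and finishing via real rank zero of $B$. The initial framing through Corollary~\ref{Ex2} is harmless but ultimately redundant, since you end up invoking the full uniqueness theorem anyway.

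One refinement the paper makes that you should fold in: it first disposes of the case $\mathrm{sp}(u)\neq\T$ by reducing to Lemma~\ref{app0}, and only then runs the $B$-embedding argument under the standing assumption $\mathrm{sp}(u)=\T$. This matters because Lemma~5.1 of \cite{LnMZ} produces a unital \emph{monomorphism} $\phi:C(\T)\to B$, which forces the lifted affine map $\gamma:\mathrm{Aff}(T(C(\T)))\to\mathrm{Aff}(T(B))$ to be strictly positive; if $\mathrm{sp}(u)\subsetneq\T$ then $\psi^{\natural}$ annihilates some nonzero positive $f\in C(\T)$ and no such monomorphism can realize it. Your write-up applies Lemma~5.1 to $C(\T)$ without this check, so as stated the match-and-lift step could fail when $\mathrm{sp}(u)$ is a proper arc.
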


\begin{proof}

Suppose that there exists a sequence of unitaries $\{u_n\}\subset A$
with finite spectrum such that
$$
\lim_{n\to\infty}u_n=u.
$$
There are mutually orthogonal projections
$p_{1,n},p_{2,n},...,p_{m(n),n}\in A$ and complex numbers
$\lambda_{1,n}, \lambda_{2,n},...,\lambda_{m(n),n}\in \C$ with
$|\lambda_{i,n}|=1,$ $i=1,2,...,m(n,)$ and $n=1,2,...,$ such that
$$
\lim_{n\to\infty}\|u-\sum_{i=1}^{m(n)} \lambda_{i,n}p_{i,n}\|=0.
$$
It follows that
$$
\lim_{n\to\infty}\|((u^*)^n+u^n)-\sum_{i=1}^{m(n)}2{\rm
Re}(\lambda_{i,n})p_{i,n}\|=0.
$$
By \ref{div},
$$
 \sum_{i=1}^{m(n)}2{\rm
Re}(\lambda_{i,n})\widehat{p_{i,n}}\in \overline{\rho_A(K_0(A))}.
$$
Thus  $\widehat{Re(u^n})\in \overline{\rho_A(K_0(A))}.$ Similarly, $\widehat{Im(u^n)}\in \overline{\rho_A(K_0(A))}.$

 To
show that $u\in CU(A),$ consider a unitary $v=\sum_{i=1}^m \lambda_i
p_n,$ where $\{p_1,p_2,...,p_m\}$ is a set of mutually orthogonal
projections such that $\sum_{i=1}^m p_j=1,$ and where
$|\lambda_i|=1,$ $i=1,2,...,m.$  Write $\lambda_j=e^{i\theta_j}$ for
some real number $\theta_j,$ $j=1,2,....$ Define
$$
h=\sum_{j=1}^m \theta_jp_j.
$$
Then
$$
v=\exp(i h).
$$
By \ref{div},  $\widehat{h}\in \overline{\rho_A(K_0(A))}.$ It follows
from \ref{T1} that $v\in CU(A).$ Since $u$ is a limit of those
unitaries with finite spectrum, $u\in CU(A).$

Now assume $u\in CU(A)$ and $\widehat{u^n+(u^{n})^*},\,
i(\widehat{u^n-(u^{n})^*})\in \overline{\rho_A(K_0(A))}$ for
$n=1,2,....$ If $sp(u)\not=\T,$ then the problem is reduced to the
case in \ref{app0}. So we now assume that $sp(u)=\T.$ Define  a
unital monomorphism $\phi: C(\T)\to A$ by $\phi(f)=f(u).$ By the
Stone-Weirestrass theorem and \ref{div}, every real valued funtion
$f\in C(\T),$ $\widehat{\phi(f)}\in \overline{\rho_A(K_0(A))}.$

As in the proof of \ref{app0}, one obtains a unital \SCA\,
$B\subset A$  which is a unital simple AH-algebra with tracial
rank zero such that the embedding $\imath: B\to A$ gives an
identification:
$$
(K_0(B), K_0(B)_+, [1_B], K_1(B))=(K_0(A), K_0(A)_+, [1_A], K_1(A)).
$$
Moreover, by Lemma 5.1 of \cite{LnMZ} that there is a unital
monomorphism $\psi: C(\T)\to B$ such that
$$
\psi_{*1}=0\andeqn (\imath\circ \psi)^{\natural}=\phi^{\natural}.
$$
Note also
$$
(\imath\circ \psi)^{\ddag}=\phi^{\ddag}
$$
(both are trivial, since $u\in CU(A)$).

 It follows from \ref{Uni} (see also Theorem 11.7 of \cite{Lnapp1}) that
$\imath\circ \psi$ and $\phi$ are approximately unitarily
equivalent. However, since $\psi_{*1}=0,$ in $B,$  by \cite{LnFU},
$\psi$ can be approximated by \hm s with finite dimensional range.
It follows that $u$ can be approximated by unitaries with finite
spectrum.

\end{proof}

If $A$ is a finite dimensional simple  \CA, then $TR(A)=0.$ Of course, every unitary
in $A$ has finite spectrum.  But $CU(A)\not=U_0(A).$ To unify the two cases, we note that $K_0(A)=\Z.$
Instead of using $\overline{\rho_A(K_0(A))},$ one may consider  the following definition:

\begin{df}\label{DD}
{\rm Let $A$ be a unital \CA. Denote by $V(\rho_A(K_0(A))),$ the
closed $\R$-linear subspace of $\text{Aff}(T(A))$ generated by
$\rho_A(K_0(A)).$ Let $\Pi: \text{Aff}(T(A))\to
\text{Aff}(T(A))/V(\rho_A(K_0(A)))$ be the quotient map. Define
the new determinant
$$
{\tilde \Delta}: U_0(A)\to \text{Aff}(T(A))/V(\rho_A(K_0(A)))
$$
by
$$
{\tilde \Delta}(u)=\Pi\circ \Delta(u)\tforal u\in U_0(A).
$$

Note that if $A$ is a finite dimensional \CA\,
$\text{Aff}(T(A))=V(\rho_A(K_0(A))).$ Thus ${\tilde \Delta}=0.$ If
$A$ is a unital simple infinite dimensional \CA\, with $TR(A)\le
1,$ by \ref{div},
$$
V(\rho_A(K_0(A)))=\overline{\rho_A(K_0(A))}.
$$

}

\end{df}

\begin{df}

{\rm Suppose that $u\in A$ is a unitary with $X=\text{sp}(u).$
Then it induces a positive affine continuous map from
$\gamma_0:C(X)_{s.a.}\to \text{Aff}(T(A))$ defined by
$$
\gamma_0(f(u))(\tau)=\tau(f(u))
$$
for all $f\in C(X)_{s.a.}$ and all $\tau\in T(A).$ Let $\Pi:
\text{Aff}(T(A))\to \text{Aff}(T(A))/V(\rho_A(K_0(A))).$ Put
$\Gamma(u)=\Pi\circ \gamma_0.$ Then $\Gamma(u)$ is a map from
$C(X)_{s.a.}$ into $\text{Aff}(T(A))/V(\rho_A(K_0(A))).$

It is clear that, $\Gamma(u)=0$ if and only if
$\widehat{u^n+(u^n)^*},\, i(\widehat{u^n+(u^n)^*})\in
V(\rho_A(K_0(A)))$ for all $n\ge 1.$

 }

\end{df}

Thus, we may state the following:

\begin{cor}\label{C1}
Let $A$ be a unital simple \CA\, with $TR(A)\le 1$ and let $u\in U_0(A).$ Then
$u$ can be approximated by unitaries with finite spectrum if and only if
$$
{\tilde \Delta}(u)=0\andeqn \Gamma(u)=0.
$$
\end{cor}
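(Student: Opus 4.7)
The plan is to reduce Corollary \ref{C1} to Theorem \ref{app} by checking that the two hypotheses $\tilde\Delta(u)=0$ and $\Gamma(u)=0$ unpack exactly to the conditions appearing in \ref{app}, together with a separate remark handling the finite-dimensional case so that the unified statement covers both possibilities under $TR(A)\le 1$.

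First I would dispose of the finite-dimensional case. If $A$ is a finite-dimensional simple \CA, then $A\cong M_n$ for some $n$, every unitary is diagonalizable and hence already has finite spectrum, and $\text{Aff}(T(A))=\R=V(\rho_A(K_0(A)))$. Thus the target spaces of both $\tilde\Delta$ and $\Gamma$ are zero, and both sides of the equivalence hold vacuously. This is the case noted in Definition \ref{DD} which motivated passing from $\overline{\rho_A(K_0(A))}$ to $V(\rho_A(K_0(A)))$ in the first place.

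Next I would treat the infinite-dimensional case, where the identification $V(\rho_A(K_0(A)))=\overline{\rho_A(K_0(A))}$ from Proposition \ref{div} is available. Using this identification, I would verify the two translations:
\begin{enumerate}
\item $\tilde\Delta(u)=\Pi\circ\Delta(u)=0$ means exactly that $\Delta(u)\in \overline{\rho_A(K_0(A))}/\overline{\rho_A(K_0(A))}=0$ in $\text{Aff}(T(A))/\overline{\rho_A(K_0(A))}$, which by Theorem \ref{T1} is equivalent to $u\in CU(A)$.
\item $\Gamma(u)=0$ means, by the definition of $\Gamma$ together with Stone--Weierstrass on $\text{sp}(u)\subset\T$, that $\widehat{u^n+(u^n)^*}$ and $i\,\widehat{u^n-(u^n)^*}$ lie in $V(\rho_A(K_0(A)))=\overline{\rho_A(K_0(A))}$ for every $n\ge 1$.
\end{enumerate}
With these two translations in hand, the equivalence asserted in the corollary is nothing but the equivalence asserted in Theorem \ref{app}, so the proof is complete.

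There is essentially no genuine obstacle here; this is a bookkeeping corollary designed to unify Theorem \ref{app} with the trivial matrix-algebra situation. The only mildly delicate point is making sure the definition of $\Gamma(u)$ really captures all the $\widehat{u^n+(u^n)^*}$ and $i\,\widehat{u^n-(u^n)^*}$ conditions simultaneously, but since $\Gamma(u)$ is a map on $C(X)_{s.a.}$ for $X=\text{sp}(u)\subset \T$ and the polynomials $\mathrm{Re}(z^n)$ and $\mathrm{Im}(z^n)$ are dense in $C(\T)_{s.a.}$, continuity of $\Gamma(u)$ together with the $\R$-linearity of $V(\rho_A(K_0(A)))$ from \ref{div} gives this equivalence immediately.
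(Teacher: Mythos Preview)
Your proposal is correct and matches the paper's approach exactly. The paper gives no explicit proof of Corollary~\ref{C1}---it simply writes ``Thus, we may state the following''---relying on the reader to assemble precisely the pieces you spell out: the finite-dimensional case handled by the remark in Definition~\ref{DD}, the identification $V(\rho_A(K_0(A)))=\overline{\rho_A(K_0(A))}$ from Proposition~\ref{div} in the infinite-dimensional case, the equivalence $\tilde\Delta(u)=0\Leftrightarrow u\in CU(A)$ via Theorem~\ref{T1}, the unpacking of $\Gamma(u)=0$ stated immediately after its definition, and finally the appeal to Theorem~\ref{app}.
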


\begin{NN}
{\rm  Suppose that $u=\exp(i h)$ for some self-adjoint element $h\in
A.$ If $u\in CU(A),$ then, by \ref{T1}, ${\tilde \Delta}(u)=0,$
i.e., $\widehat{h}\in V(\rho_A(K_0(A))).$  So one may ask if there
are unitaries with ${\tilde\Delta}(u)=0$  but $\Gamma(u)\not=0.$
Proposition \ref{No} below says that this could happen.

}
\end{NN}

\begin{prop}\label{No}
For any unital  separable simple  \CA\, $A$ with $TR(A)=1,$ there
is a unitary $u$ with ${\tilde \Delta}(u)=0$ {\rm (}or $u\in
CU(A)${\rm )} such that $\Gamma(u)\not=0$ and which is not a limit
of unitaries with finite spectrum.

\end{prop}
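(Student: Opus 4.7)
The plan is to apply Remark \ref{R3} and realize $u$ as $\phi(z)$ for a unital monomorphism $\phi: C(\T) \to A$ whose de la Harpe--Skandalis class is trivial but whose tracial action is chosen so that a single moment escapes $\overline{\rho_A(K_0(A))}$. By Theorem \ref{app}, this will automatically prevent $u$ from being approximated by unitaries with finite spectrum.

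Since $TR(A)=1$, Theorem \ref{Dense} gives $\overline{\rho_A(K_0(A))} \subsetneq \text{Aff}(T(A))$. By Proposition \ref{div} (using that the closed real-linear subspace $\overline{\rho_A(K_0(A))}$ is invariant under rescaling) we may pick $b \in \text{Aff}(T(A)) \setminus \overline{\rho_A(K_0(A))}$ with $\|b\| < 1$. Let $m$ denote normalized Haar measure on $\T$ and define a continuous affine map $s: T(A) \to T_f(C(\T))$ by
$$
s(\tau) \;=\; \frac{1}{2}\, m \;+\; \frac{1+b(\tau)}{4}\,\delta_1 \;+\; \frac{1-b(\tau)}{4}\,\delta_{-1}.
$$
The Haar summand guarantees that $s(\tau)$ is strictly positive, and the bound $\|b\| < 1$ keeps the atomic weights nonnegative, so $s$ genuinely lands in $T_f(C(\T))$.

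Next, apply Remark \ref{R3} with $x = 0 \in \text{Aff}(T(A))/\overline{\rho_A(K_0(A))}$ and with the affine map $s$ constructed above. This produces a unitary $u \in U_0(A)$ satisfying $\Delta(u) = 0$ and $\tau(f(u)) = \int_\T f \, ds(\tau)$ for all $f \in C(\T)_{s.a}$ and $\tau \in T(A)$. By Theorem \ref{T1}, $\Delta(u) = 0$ forces $u \in CU(A)$, hence ${\tilde \Delta}(u) = 0$. Evaluating $f(z) = z + \bar z$ against $s(\tau)$ yields
$$
\tau(u + u^*) \;=\; \frac{1+b(\tau)}{4}\cdot 2 \;+\; \frac{1-b(\tau)}{4}\cdot(-2) \;=\; b(\tau),
$$
so $\widehat{u+u^*} = b \notin \overline{\rho_A(K_0(A))} = V(\rho_A(K_0(A)))$. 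Consequently $\Gamma(u) \neq 0$, and Theorem \ref{app} then forbids $u$ from being approximated by unitaries with finite spectrum.

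The only real work lies in the invocation of Remark \ref{R3} for the specific pair $(x,s)$, which ultimately rests on the existence theorem 8.4 of \cite{Lnncl} already deployed in the proof of Theorem \ref{CEX}; the remainder is routine. The conceptual point --- and the only place one might slip up --- is that the splitting $U(C(\T))/CU(C(\T)) \cong U_0(C(\T))/CU(C(\T)) \oplus K_1(C(\T))$ together with the freedom to prescribe $s$ lets us choose $\Delta(u) = 0$ independently of the tracial moments, so the moment $\widehat{u+u^*}$ can be forced to leave $V(\rho_A(K_0(A)))$ while $u$ remains in $CU(A)$.
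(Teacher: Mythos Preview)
Your argument is correct. The construction of $s$ is clean: the Haar component ensures strict positivity, the bound $\|b\|<1$ keeps the atoms nonnegative, and the moment computation $\widehat{u+u^*}=b$ is right. Remark~\ref{R3} (with $x=0$) then delivers exactly the unitary you need, and Theorem~\ref{app} (or Corollary~\ref{C1}) finishes.

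Your route, however, is genuinely different from the paper's. The paper does not invoke Remark~\ref{R3} or the existence theorem 8.4 of \cite{Lnncl} here; instead it builds $u$ as an explicit exponential. It picks a projection $e$ with an equivalent copy $e_1\le 1-e$, chooses $h\in (eAe)_{s.a.}$ with $\|h\|\le 1$ that cannot be approximated by finite-spectrum self-adjoints, and then (in the nontrivial case $\widehat{h}\notin\overline{\rho_A(K_0(A))}$) sets $h_1=rh+2\pi e_1-w^*rhw$ for a suitable rational $r$ and a unitary $w$ implementing $[e]=[e_1]$. The trace of $h_1$ collapses to $2\pi\widehat{e_1}\in\overline{\rho_A(K_0(A))}$, so $u=\exp(ih_1)\in CU(A)$, while a direct computation shows $\widehat{h_1^2}\notin\overline{\rho_A(K_0(A))}$, whence $\Gamma(u)\not=0$. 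This hands-on construction avoids the heavy existence machinery and yields a $u$ with $\mathrm{sp}(u)\not=\T$; your approach is shorter and more conceptual but leans on \cite{Lnncl}, and the resulting $u$ has full spectrum $\T$. Your method is in fact the mirror image of what the paper does later in Proposition~\ref{LP}, where the roles of $\tilde\Delta$ and $\Gamma$ are swapped.
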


\begin{proof}
Let $e\in A$ be a non-zero projection such that there is a
projection $e_1\in (1-e)A(1-e)$ such that $[e]=[e_1].$ Then
$TR(eAe)\le 1$ by 5.3 of \cite{Lnplms}. Since $A$ does not have real rank zero, one has
$TR(eAe)=1.$

It follows from \ref{Dense} that
$$
\text{Aff}(T(eAe))\not=\overline{\rho_A(K_0(eAe))}=\overline{\rho_A(K_0(A))}.
$$
Choose $h\in (eAe)_{s.a.}$ with $\|h\|\le 1$ such that $h$ is not a
norm limit of self-adjoint elements with finite spectrum.

If $\widehat{h}\in \overline{\rho_A(K_0(eAe))},$ then define
$$
u=\exp(i h).
$$
Then, $\Delta(u)=0$ and by Theorem \ref{T1}, $u\in CU(A).$ Since $h$ can not be
approximated by self-adjoint elements with finite spectrum, nor $u$
can be approximated by unitaries with finite spectrum since
$h=(1/i)\log (u)$ for a continuous branch of the logarithm (note that $sp(u)\not=\T$).

Now suppose that $\widehat{h}\not\in \overline{\rho_A(K_0(eAe))}.$

We also have, by \ref{div}, $2\pi \widehat{h}\not\in \overline{\rho_A(K_0(A))}.$  We claim that there is
a rational number $0<r\le 1$ such that
$r\widehat{h^2}-2\pi\widehat{h}\not\in \overline{\rho_A(K_0(eAe))}.$

In fact, if $\widehat{h^2}\in \overline{\rho_A(K_0(eAe))},$ then the claim follows easily.  So we assume
that $\widehat{h^2}\not\in \overline{\rho_A(K_0(eAe))}.$
Suppose that, for some $0<r_1<1,$  $r_1\widehat{h^2}-2\pi\widehat{h}\in \overline{\rho_A(K_0(eAe))}.$
Then  $(1-r_1)\widehat{h^2}\not\in \overline{\rho_A(K_0(eAe))}.$  Hence
$$
\widehat{h^2}-2\pi \widehat{h}=(1-r_1)\widehat{h^2}+(r_1\widehat{h^2}-2\pi\widehat{h})\not\in \overline{\rho_A(K_0(eAe))}.
$$
This proves the claim.

Now define $h_1=rh+ 2\pi e_1-w^*rhw,$ where $w\in A$ is a unitary such
that $w^*ew=e_1.$ Put
$$
u=\exp(ih_1)
$$
%Then $\tau(h_1)=2\pi\tau(e_1).$ By Theorem 5.4 of \cite{Lnctr1}, $A$
%is tracially approximately divisible.
It follows from \ref{div} that
$$
2\pi \widehat{e_1}\in \overline{\rho_A(K_0(eAe))}.
$$
Thus $\tau(h_1)=2\pi\tau(e_1)\in \overline{\rho_A(K_0(eAe))}.$
Therefore, by \ref{T1}, $u\in CU(A).$ Since
\beq
\widehat{h_1^2}&=&\widehat{r^2h^2}+4\pi^2\, \widehat{e_1}-4\pi
\widehat{rh}+\widehat{r^2h^2}\\
&=& 2r(r\widehat{h^2}-2\pi\widehat{h})-4\pi^2 \widehat{e_1}\not\in
\overline{\rho_A(K_(A))}.
\eneq
%($\text{Af}(T(A))/\overline{\rho_A(K_0(A))}$ is torsion free).
Therefore, by \ref{app0},  $h_1$ can not be approximated by self-adjoint elements
with finite spectrum. It follows that $u$ can not be approxiamted by
unitaries with finite spectrum.

\end{proof}

Another question is whether  $\Gamma(u)=0$  is sufficient for
$\Delta(u)=0.$   For the case that ${\rm sp}(u)\not=\T,$ one has the
following. But in general, \ref{LP} gives a negative answer.

\begin{prop}
Let $A$ be a unital separable simple \CA\, with $TR(A)\le 1.$
Suppose that $u\in U_0(A)$ with ${\rm sp}(u)\not=\T.$ If
$\Gamma(u)=0,$ then ${\tilde\Delta}(u)=0,$ $u\in CU(A)$ and $u$ can be
approximated by unitaries with finite spectrum.
\end{prop}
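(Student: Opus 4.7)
The plan is to exploit the hypothesis $\operatorname{sp}(u)\neq \T$ to pass from the unitary $u$ to a self-adjoint logarithm and reduce the entire statement to the already-proved self-adjoint Lemma \ref{app0}. Since $\operatorname{sp}(u)$ is a proper closed subset of $\T$, there is an open arc of $\T$ avoiding $\operatorname{sp}(u)$, so a continuous branch $\ell$ of the logarithm is defined on $\operatorname{sp}(u)$. Put $h=\ell(u)/i\in A_{s.a.}$ via the continuous functional calculus; then $u=\exp(ih)$ identically, not merely approximately.

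Next I would use the hypothesis $\Gamma(u)=0$. By definition, $\Gamma(u)=0$ means $\widehat{f(u)}\in V(\rho_A(K_0(A)))$ for every $f\in C(\operatorname{sp}(u))_{s.a.}$. Since $A$ is a unital simple infinite dimensional \CA\ with $TR(A)\le 1$, Proposition \ref{div} gives $V(\rho_A(K_0(A)))=\overline{\rho_A(K_0(A))}$. Applying this to the real-valued continuous functions $(\ell/i)^n$ on $\operatorname{sp}(u)$ yields
\[
\widehat{h^n}\in \overline{\rho_A(K_0(A))}\,\,\,\text{for all } n=1,2,\ldots.
\]
In particular, taking $n=1$, we have $\widehat{h}\in\overline{\rho_A(K_0(A))}$, and since $u=\exp(ih)$, criterion (5) of Theorem \ref{T1} gives $u\in CU(A)$, i.e.\ $\Delta(u)=0$ and hence $\tilde{\Delta}(u)=0$.

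Finally, with $\widehat{h^n}\in\overline{\rho_A(K_0(A))}$ for every $n$, Lemma \ref{app0} applies directly to $h$: there exist self-adjoint elements $h_k\in A_{s.a.}$ with finite spectrum such that $\|h-h_k\|\to 0$. Then $\exp(ih_k)$ are unitaries with finite spectrum (each such $h_k=\sum_j \mu_{j,k}q_{j,k}$ yields $\exp(ih_k)=\sum_j e^{i\mu_{j,k}}q_{j,k}$), and norm-continuity of the exponential gives $\exp(ih_k)\to \exp(ih)=u$. This produces the desired approximation.

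There is really no serious obstacle here; the proof is a short assembly of three earlier results (\ref{div}, \ref{T1}, \ref{app0}). The only point requiring attention is ensuring that the branch-of-logarithm functional calculus is legitimate, which holds precisely because $\operatorname{sp}(u)\neq\T$ — this is exactly where the hypothesis is used, and exactly why the statement fails in general (as the counterexample \ref{LP} is promised to show). One could alternatively cite Theorem \ref{app} directly once $u\in CU(A)$ and $\Gamma(u)=0$ are in hand, but the direct route through \ref{app0} is slightly more transparent and highlights why the spectral restriction is essential.
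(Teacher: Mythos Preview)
Your proof is correct and follows essentially the same line as the paper's: use $\operatorname{sp}(u)\neq\T$ to write $u=\exp(if(u))$ for a real-valued continuous $f$, then $\Gamma(u)=0$ gives $\widehat{f(u)}\in\overline{\rho_A(K_0(A))}$, whence $u\in CU(A)$ by Theorem~\ref{T1}. The paper's proof stops there, leaving the approximation to the already established Corollary~\ref{C1} (equivalently Theorem~\ref{app}); your explicit route through Lemma~\ref{app0} applied to $h$ is the natural elaboration and is exactly the alternative you yourself flag at the end.
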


\begin{proof}
Since ${\rm sp}(u)\not=\T,$ there is a real valued continuous
function $f\in C({\rm sp}(u))$ such that $u=\exp(i f(u)).$ Thus the
condition that $\Gamma(u)=0$ implies that $\widehat{f(u)}\in
\overline{\rho_A(K_0(A))}.$ By \ref{T1}, $u\in CU(A).$

\end{proof}

\begin{prop}\label{LP}
Let $A$ be a unital infinite dimensional separable simple \CA\, with
$TR(A)=1.$ Then there are unitaries $u\in U_0(A)$ with $\Gamma(u)=0$
such that $u\not\in CU(A).$ In particular, ${\tilde \Delta}(u)\not=0$ and $u$ can not be
approximated by unitaries with finite spectrum.

\end{prop}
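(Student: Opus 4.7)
The plan is to apply Remark \ref{R3}, choosing the affine map $s: T(A) \to T_{f}(C(\T))$ to be \emph{constant} while simultaneously prescribing a nonzero value for the de la Harpe--Skandalis determinant.

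First I would fix a strictly positive normalized Borel measure $\mu$ on $\T$ (for instance, the normalized Haar measure) and define $s: T(A) \to T_{f}(C(\T))$ by $s(\tau) = \mu$ for every $\tau \in T(A)$; this is manifestly continuous and affine. Since $TR(A) = 1$, Theorem \ref{Dense} tells us that $\overline{\rho_A(K_0(A))}$ is a proper subspace of $\text{Aff}(T(A))$, so one may pick some nonzero element $x \in \text{Aff}(T(A))/\overline{\rho_A(K_0(A))}$. By Remark \ref{R3}, there exists a unitary $u \in U_0(A)$ such that $\Delta(u) = x$ and
$$
\tau(f(u)) = \int_\T f\, d\mu \tforal f \in C(\T)_{s.a.} \andeqn \tau \in T(A).
$$

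The key point is that the right hand side does not depend on $\tau$, so $\widehat{f(u)}$ is a constant function on $T(A)$ for every real-valued $f \in C(\T)$. By Proposition \ref{div}, $\overline{\rho_A(K_0(A))}$ is a closed $\R$-linear subspace of $\text{Aff}(T(A))$ containing $\rho_A([1_A]) = 1$, hence it contains every constant function. Specialising to $f(z) = z^n + \bar z^n$ and $f(z) = i(z^n - \bar z^n)$ shows that $\widehat{u^n + (u^n)^*}$ and $i\widehat{(u^n - (u^n)^*)}$ lie in $\overline{\rho_A(K_0(A))}$ for every $n \ge 1$, which is precisely the condition $\Gamma(u) = 0$.

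For the negative part, $\Delta(u) = x \neq 0$ gives $u \notin CU(A)$ by Theorem \ref{T1}. Since Proposition \ref{div} identifies $V(\rho_A(K_0(A)))$ with $\overline{\rho_A(K_0(A))}$ in our setting, we also obtain ${\tilde\Delta}(u) = \Delta(u) \neq 0$, and Theorem \ref{app} then forbids approximation of $u$ by unitaries with finite spectrum. The only nontrivial input is Remark \ref{R3}, whose proof is already packaged in the excerpt (it combines 8.4 of \cite{Lnncl} with the splitting $U(C(\T))/CU(C(\T)) = U_0(C(\T))/CU(C(\T)) \oplus K_1(C(\T))$ and the divisibility of $U_0(A)/CU(A)$ needed to prescribe $x$ via a choice of $\kappa: K_1(C(\T)) \to U_0(A)/CU(A)$); granted that, the verification above is pure bookkeeping, so I do not foresee a genuine obstacle.
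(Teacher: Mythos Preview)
Your proof is correct and rests on the same core idea as the paper's: both invoke Remark~\ref{R3} to manufacture $u\in U_0(A)$ with a prescribed nonzero value $\Delta(u)=x$ and prescribed trace data $\tau(f(u))=f(s(\tau))$, and then verify $\Gamma(u)=0$ from the choice of $s$. The only difference is in that choice. The paper first embeds a unital simple AH-algebra $B$ of tracial rank zero realizing the same ordered $K$-theory, picks $w\in U_0(B)$ with $\mathrm{sp}(w)=\T$, and takes $s(\tau)$ to be the measure $f\mapsto\tau(f(w))$; the conclusion $\Gamma(u)=0$ then follows because $\text{Aff}(T(B))=\overline{\rho_B(K_0(B))}=\overline{\rho_A(K_0(A))}$. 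Your constant choice $s\equiv\mu$ is more direct and bypasses the auxiliary subalgebra $B$ entirely, needing only the observation (immediate from Proposition~\ref{div} together with $\rho_A([1_A])=1$) that all constant functions lie in $\overline{\rho_A(K_0(A))}$. Both routes buy the same conclusion; yours is a pleasant simplification.
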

\begin{proof}
There exists a unital \SCA\, $B\subset A$ with tracial rank zero
such that the embedding gives the following identification:
$$
(K_0(B), K_0(B)_+, [1_B], K_1(B))=(K_0(A), K_0(A)_+, [1_A],
K_1(A)).
$$
Note that $\text{Aff}(T(B))=\overline{\rho_B(K_0(B))}=
\overline{\rho_A(K_0(A))}.$

Let $w\in U_0(B)$ be a unitary with ${\rm sp}(w)=\T.$ Thus
$\Gamma(w)=0.$ Let $\gamma: \text{Aff}(T(C(\T)))\to
\text{Aff}(T(A))$ be given by $\gamma(f)(\tau)=\tau(f(u))$ for
$f\in C(T)_{s.a.}$ and $\tau\in T(A).$ Since $TR(A)=1,$ by
\ref{T1}, there are unitaries $u_0\in U_0(A)\setminus CU(A).$ By
the proof of \ref{CEX} (see also \ref{R3}), there is a unitary
$u\in U_0(A)$ such that
$$
\overline{u}=\overline{u_0}\andeqn
$$
$$
\tau(f(u))=\tau(f(w))\tforal \tau\in T(A)
$$
and for all $f\in C(\T)_{s.a.}.$ Thus $\tilde{\Delta}(u)\not=0$
and $\Gamma(u)=\Gamma(w)=0.$ By \ref{app}, $u$ can not be
approximated by unitaries with finite spectrum.

\end{proof}

\bibliographystyle{amsalpha}
\bibliography{}

\begin{thebibliography}{BEE}

\bibitem{EGL} G. A. Elliott, G. Gong and L. Li, {\em On the classification of simple inductive limit $C\sp *$-algebras. II, The isomorphism theorem},  Invent. Math.  {\bf 168}  (2007),   249--320.

\bibitem{HS} P. de la Harpe, P. and G. Skandalis, G.
{\em D\'{e}terminant associ\'{e}  \`{a} une trace sur une alg\'{e}bre de Banach},
Ann. Inst. Fourier (Grenoble)  {\bf 34} (1984), 1, 241--260.

\bibitem{LnFU} H. Lin, {\em Exponential rank of $C\sp *$-algebras with real rank zero and the Brown-Pedersen conjectures},  J. Funct. Anal.  {\bf 114}  (1993),   1--11.

\bibitem{Lnplms} H. Lin, {\em The tracial topological rank of $C\sp *$-algebras}  Proc. London Math. Soc. {\bf  83}  (2001),   199--234.

\bibitem{Lnctr1} H. Lin, {\em  Simple nuclear $C\sp *$-algebras of tracial topological rank one},  J. Funct. Anal.  {\bf 251}  (2007),  601--679.

\bibitem{LM} H. Lin and H.  Matui, {\em Minimal dynamical systems on the product of the Cantor set and the circle. II},  Selecta Math. (N.S.)  {\bf 12}  (2006),   199--239.

\bibitem{Lnapp1} H. Lin,  {\em Approximate Unitary Equivalence in Simple $C^*$-algebras of Tracial Rank One},
preprint (arXiv:0801.2929).

\bibitem{LnMZ} H. Lin, {\em The Range of Approximate Unitary Equivalence Classes of Homomorphisms from AH-algebras},
 to appear (arXiv:0801.3858).
 
\bibitem{Lnnhomp} H. Lin, {\em  Homotopy of unitaries in simple C*-algebras with tarcial rank one}, preprint (arXiv:0805.0583).

\bibitem{Lnncl} H. Lin, {\em Asymptotically Unitary Equivalence and Classification of Simple Amenable $C^*$-algebras},
preprint (arXiv:0806.0636).

\bibitem{Phe} N. C. Phillips, {\em Reduction of exponential rank in direct limits of $C\sp *$-algebras},  Canad. J. Math. {\bf  46}  (1994),  818--853.

\bibitem{Ph2} N. C. Phillips, {\em A survey of exponential rank.  $C\sp \ast$-algebras}: 1943--1993 (San Antonio, TX, 1993),  352--399, Contemp. Math., {\bf 167}, Amer. Math. Soc., Providence, RI, 1994.

\bibitem{Ph3} N. C. Phillips, {\em How many exponentials?}  Amer. J. Math.  {\bf 116}  (1994),  1513--1543.

\bibitem{PR} N. C. Phillips and J. R.  Ringrose, {\em Exponential rank in operator algebras.  Current topics in operator algebras} (Nara, 1990),  395--413, World Sci. Publ., River Edge, NJ, 1991.


\bibitem{Th} K. Thomsen, {\em Traces, unitary characters and crossed products by $Z$}  Publ. Res. Inst. Math. Sci.  {\bf 31}  (1995),   1011--1029.
\end{thebibliography}

\end{document}